\theoremstyle{plain}
\newtheorem{thm}{Theorem}[section]
\newtheorem{lem}[thm]{Lemma}
\newtheorem{cor}[thm]{Corollary}
\newtheorem{prop}[thm]{Proposition}
\theoremstyle{definition}
\newtheorem{defi}[thm]{Definition}
\theoremstyle{remark}
\newtheorem{rem}[thm]{Remark}
\begin{document}


\title{Density estimates for differential equations of second order satisfying a weak H\"{o}rmander condition} 

\author{J\"org Kampen }
\maketitle





\begin{abstract} 
We prove an extension of H\"{o}rmander's classical result on hypoelliptic second order equations in \cite{H} where the coefficients of the related vector fields are globally Lipschitz and satisfy the classical H\"{o}rmander condition on a dense set while the density still exists in a classical sense. Furthermore, H\"{o}rmanders classical result and related density estimates in \cite{KS} based on Malliavin calculus are recovered from an analytical point of view.
\end{abstract}





\section{Introduction}
H\"{o}rmander's paper on hypoelliptic equations of second order shows that there are smooth densities if a Lie-algebra condition is pointwise satisfied, which is determined by the defining vector fields of the equation. This condition is also called the 'H\"{o}rmander condition'. Let us recall the situation. For positive  natural numbers $m,n$ consider a matrix-valued function 
\begin{equation}\label{vcoeff}
x\rightarrow (v_{ji})^{n,m}(x),~1\leq j\leq n,~0\leq i\leq m,
\end{equation}
 on ${\mathbb R}^n$, and $m$ smooth vector fields 
\begin{equation}\label{vvec}
V_i=\sum_{j=1}^n v_{ji}(x)\frac{\partial}{\partial x_j},
\end{equation}
where $0\leq i\leq m$. These vector fields define a
Cauchy problem on $[0,\infty)\times {\mathbb R}^n$ for the distribution $p$ of the form
\begin{equation}
	\label{hoer1}
	\left\lbrace \begin{array}{ll}
		\frac{\partial p}{\partial t}=\frac{1}{2}\sum_{i=1}^mV_i^2p+V_0p\\
		\\
		p(0,x;y)=\delta_y(x),
	\end{array}\right.
\end{equation}
where $\delta_y(x)=\delta(x-y)$ is the Dirac delta distribution with an argument shifted by the vector $y\in {\mathbb R}^n$. H\"{o}rmander showed that the operator in (\ref{hoer1}) is hypoelliptic on $(0,\infty)\times {\mathbb R}^n$ in the sense that for all $y\in {\mathbb R}^n$ the distribution $p$ satisfying (\ref{hoer1}) is smooth on the domain $(0,\infty)\times {\mathbb R}^n$, if the following condition is satisfied:
for all $x\in {\mathbb R}^n$  assume that
\begin{equation}
H_x={\mathbb R}^n,
\end{equation}
where
\begin{equation}\label{Hoergenx}
\begin{array}{ll}
H_x:=\mbox{span}{\Big\{} V_i(x), \left[V_j,V_k \right](x),
\left[ \left[V_j,V_k \right], V_l\right](x),\\
\\
\cdots |1\leq i\leq m,~0\leq j,k,l,\cdots \leq m {\Big \}},
\end{array}
\end{equation}
where $\left[.,.\right]$ denotes the Lie bracket of vector fields as usual. Usually this goes with the assumption that
the coefficients of the vector fields are smooth (i.e. $C^{\infty}$) and bounded  with bounded derivatives, i.e. $v_{ji} \in C_{b}^{\infty}\left({\mathbb R}^n \right)$. Sometimes linear growth for the functions $v_{ji}$ themselves is allowed ( $v_{ji} \in C_{b,l}^{\infty}\left({\mathbb R}^n \right)$ in symbols). Indeed this makes no real difference. These strong regularity assumptions are due to the fact that iterative application of the Lie bracket operator involves derivatives of arbitrary order of the coefficients of the vector fields $V_i$. Another reason is that lower regularity of the coefficients implies lower regularity of a distributional solution in general. However, the proof of existence of a density (of possibly lower regularity) does not require assumptions that are that strong. The H\"{o}rmander condition implies that the diffusion matrix of second order coefficients and the vector field of first order coefficients interact via diffusion on subspaces induced by the second order coefficients, and shifts and rotations induced by the first order coefficients. Locally this implies the existence of a density in a Trotter-product limit. In order to preserve these phenomena it is sufficient to require a H\"{o}rmander condition on a dense subset of ${\mathbb R}^n$ - as we shall see in detail below. This leads us to 'weak' formulations for the H\"{o}rmander condition. The price to pay is lower regularity of the density, of course, but we can recover the original result in \cite{H} and \cite{KS} from a different point of view if we allow for the classical H\"{o}rmander condition.
\begin{defi}
The vector fields $V_i=\sum_{j=1}^n v_{ji}(x)\frac{\partial}{\partial x_j},~1\leq j\leq n,~0\leq i\leq m$ defined on a domain $\Omega\subseteq {\mathbb R}^n$ satisfy the weak H\"{o}rmander condition if for all $1\leq j\leq n,~0\leq i\leq m$ the function $v_{ij}$ is globally Lipschitz and there is a dense set $D\subset \Omega$ (dense with respect to the standard topology on Euclidean space) such that for all $x\in D$ the classical H\"{o}rmander condition holds at $x$, i.e., $H_x={\mathbb R}^n$ for all $x\in D$.
\end{defi}
We prove the following extension of H\"{o}rmander's theorem
\begin{thm}
	\label{kampen}
Assume that $V_i=\sum_{j=1}^n v_{ji}(x)\frac{\partial}{\partial x_j}$
where $0\leq i\leq m,~1\leq j\leq n$ satisfy the weak H\"{o}rmander condition.  	 
Then the density $p$ exists in $C^{1,2}$, i.e. 
\begin{equation}
\begin{array}{ll}
p:(0,T]\times {\mathbb R}^n\times{\mathbb R}^n\rightarrow {\mathbb R}\in C^{1,2}\left( (0,T]\times {\mathbb R}^n\times{\mathbb R}^n\right). 
\end{array}
\end{equation}
Moreover, if the classical H\"{o}rmander condition holds, then for each nonnegative natural number $j$, and multiindices $\alpha,\beta$ there are increasing functions of time
\begin{equation}\label{constAB1}
A_{j,\alpha,\beta}, B_{j,\alpha,\beta}:[0,T]\rightarrow {\mathbb R},
\end{equation}
and functions
\begin{equation}\label{constmn1}
n_{j,\alpha,\beta}, 
m_{j,\alpha,\beta}:
{\mathbb N}\times {\mathbb N}^d\times {\mathbb N}^d\rightarrow {\mathbb N},
\end{equation}
such that 
\begin{equation}\label{pxest1}
\begin{array}{ll}
{\Bigg |}\frac{\partial^j}{\partial t^j} \frac{\partial^{|\alpha|}}{\partial x^{\alpha}} \frac{\partial^{|\beta|}}{\partial y^{\beta}}p(t,x,y){\Bigg |}\\
\\
\leq \frac{A_{j,\alpha,\beta}(t)(1+x)^{m_{j,\alpha,\beta}}}{t^{n_{j,\alpha,\beta}}}\exp\left(-B_{j,\alpha,\beta}(t)\frac{(x-y)^2}{t}\right)
\end{array}
\end{equation}
In the case where the weak H\"{o}rmander condition holds the relations (\ref{pxest1}) hold for $j\leq 1$ and $|\alpha|+|\beta|\leq 2$, where $|\alpha|=\sum_{i=1}^n\alpha_i$ denotes the order of a multiindex $\alpha=(\alpha_1,\cdots,\alpha_n)$ with nonnegative entries, and similarly for the multiindex $\beta$.
Moreover, the properties of all functions (\ref{constAB1}) and  (\ref{constmn1}) depend in general on the level of iteration of Lie-bracket iteration at which the H\"{o}rmander condition becomes true.
\end{thm}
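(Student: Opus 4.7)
The plan is to combine a mollification of the Lipschitz coefficients with a Trotter-product representation of the semigroup generated by $\frac{1}{2}\sum_i V_i^2+V_0$, and then to pass to a limit while keeping uniform control of first time and second space derivatives. First I would mollify each $v_{ji}$ by convolution with a smooth kernel to obtain $v_{ji}^{\varepsilon}\in C_b^{\infty}({\mathbb R}^n)$ converging uniformly on compacts to $v_{ji}$, with all derivatives bounded at every fixed $\varepsilon>0$. On (a neighborhood of) $D$ the smoothed vector fields $V_i^{\varepsilon}$ still satisfy the classical H\"ormander condition, so H\"ormander's theorem \cite{H} produces a smooth fundamental solution $p^{\varepsilon}(t,x,y)$ of the approximating version of (\ref{hoer1}).

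The core of the argument is to obtain bounds on $p^{\varepsilon}$ that are uniform in $\varepsilon$ and of the Gaussian form displayed in (\ref{pxest1}). For this I would exploit the Trotter product formula
\[
e^{t\left(\frac{1}{2}\sum_i(V_i^{\varepsilon})^2+V_0^{\varepsilon}\right)}=\lim_{N\to\infty}\left(\prod_{i=1}^{m}e^{\frac{t}{2N}(V_i^{\varepsilon})^2}\,e^{\frac{t}{N}V_0^{\varepsilon}}\right)^{N},
\]
which factors the flow into one-dimensional Gaussian convolutions along each $V_i^{\varepsilon}$ composed with transport operators along $V_0^{\varepsilon}$. At a point $x\in D$ where iterated Lie brackets span ${\mathbb R}^n$, finitely many alternating compositions $e^{sV_i^2}e^{tV_j^2}$ generate effective diffusion into the bracket direction $[V_i,V_j]$, and gluing the one-dimensional heat-kernel factors through the product yields a pointwise bound
\[
|p^{\varepsilon}(t,x,y)|\le\frac{A(t)(1+x)^m}{t^n}\exp\left(-B(t)(x-y)^2/t\right),
\]
with exponents $n,m$ determined by the Lie-bracket depth needed at $x$. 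Crucially, derivatives of order $\le 1$ in $t$ and $\le 2$ in $(x,y)$ applied to the Trotter factors involve only one (Lipschitz) derivative of each $v_{ji}$, so the corresponding bounds survive as $\varepsilon\to 0$; under the classical H\"ormander condition with $C_b^{\infty}$ coefficients the same scheme tolerates differentiation of arbitrary order, delivering (\ref{pxest1}) for all $(j,\alpha,\beta)$.

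With these uniform bounds in hand, an Arzel\`a--Ascoli compactness argument extracts a subsequence $p^{\varepsilon_k}$ converging in $C^{1,2}_{\mathrm{loc}}((0,T]\times{\mathbb R}^n\times{\mathbb R}^n)$ to a limit $p$, which solves (\ref{hoer1}) classically and inherits (\ref{pxest1}) in the stated range. The density of $D$ in ${\mathbb R}^n$, combined with the global Lipschitz continuity of the $v_{ji}$, is used to transport the pointwise bounds established at points of $D$ to all of ${\mathbb R}^n$ by continuity. The main obstacle I expect is producing Trotter-product estimates whose constants are uniform both in $\varepsilon$ and in the varying bracket depth across $D$, and thereby ensuring that the limit truly lives in $C^{1,2}$ rather than merely as a distributional solution; the dependence of $n_{j,\alpha,\beta}$ and $m_{j,\alpha,\beta}$ in (\ref{constmn1}) on the worst-case Lie-bracket iteration level is precisely the quantitative trace of this difficulty.
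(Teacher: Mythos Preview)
Your overall architecture---approximate the Lipschitz coefficients by smooth ones satisfying the classical H\"ormander condition, obtain uniform Gaussian bounds, and pass to the limit---matches the paper's strategy in Section~4. However, the technical implementation differs in two substantive ways, and the first of these is a genuine gap.

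\textbf{The mollification step does not work as stated.} You assert that after convolving with a smooth kernel the fields $V_i^{\varepsilon}$ ``still satisfy the classical H\"ormander condition'' on a neighborhood of $D$. This is not justified: the bracket $[V_i^{\varepsilon},V_j^{\varepsilon}](x)$ depends on values of the original $v_{ji}$ in an $\varepsilon$-neighborhood of $x$, not just at $x$, and averaging can annihilate bracket contributions even at points where the unsmoothed brackets span. The paper does \emph{not} mollify. Instead it proves a separate lemma (Section~4) constructing an approximating sequence $a_{ij}^m,b_k^m\in C^{\infty}$ that satisfies the classical H\"ormander condition \emph{everywhere} on ${\mathbb R}^n$ and converges pointwise to the given coefficients; the construction starts from an arbitrary smooth H\"ormander family and uses partitions of unity to force agreement with the target coefficients on an exhausting sequence of finite subsets $D_0^q\subset D$. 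Without such a construction you cannot invoke H\"ormander's theorem for $p^{\varepsilon}$, and the whole scheme stalls at the first step.

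\textbf{The route to the estimates is also different.} You propose to extract the Gaussian bounds directly from a Trotter product $\prod_i e^{\frac{t}{2N}(V_i^{\varepsilon})^2}e^{\frac{t}{N}V_0^{\varepsilon}}$ of one-dimensional heat kernels. The paper instead builds the estimates analytically via a Levy/parametrix expansion: it first treats the \emph{reduced} H\"ormander case (no effective drift), where it shows the elliptic degeneracy set has Lebesgue measure zero and writes the density as $N_0+\int N_0\,\phi$ with an explicit Gaussian leading term $N_0$ and a recursively defined $\phi$; the Kusuoka--Stroock bounds follow from term-by-term estimates of this series. For the general H\"ormander case the paper uses a Trotter product only to manufacture the \emph{leading term} on each local chart---splitting into a subspace diffusion (where reduced H\"ormander holds) and a transport flow along the remaining drift directions---and then runs the same Levy expansion with that leading term. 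Your direct Trotter approach could in principle work, but the paper's parametrix route is what actually delivers the uniform constants that survive the limit; your own closing paragraph correctly flags uniform-in-$\varepsilon$ Trotter constants as the main obstacle, and you have not indicated how to overcome it.
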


H\"{o}rmander's classical theorem 
essentially tells us that the density $p$, i.e. the solution of (\ref{hoer1}) satisfies $p\in C^{\infty}
\left(\left(0,\infty\right) \times {\mathbb R}^n \right)$ as a function of $t$ and $x$ (given $y$ as a parameter). One of the main motivations of the Malliavin calculus was to examine this result from a probabilistic point of view in the case of a classical H\"{o}rmander condition. For diffusions this research culminated in the work of  Kusuoka and Stroock (cf. \cite{KS}) who proved the following result.

\begin{thm}
	\label{stroock}
	Consider a $d$-dimensional difffusion process of the form
	\begin{equation}\label{stochm}
		\mathrm{d}X_t \ = \ \sum_{i=1}^d\sigma_{0i}(X_t)\mathrm{d}t + \sum_{j=1}^{d}\sigma_{ij}(X_t)\mathrm{d}W^j_t
	\end{equation}
with $X(0)=x\in {\mathbb R}^d$ with values in ${\mathbb R}^d$ and on a time interval $[0,T]$.
Assume that $\sigma_{0i},\sigma_{ij}\in C^{\infty}_{lb}$. Then the law of the process $X$ is absolutely continuous with respect to the Lebesgue measure, and the density $p$ exists and is smooth, i.e. 
\begin{equation}
\begin{array}{ll}
p:(0,T]\times {\mathbb R}^d\times{\mathbb R}^d\rightarrow {\mathbb R}\in C^{\infty}\left( (0,T]\times {\mathbb R}^d\times{\mathbb R}^d\right). 
\end{array}
\end{equation}
Moreover, for each nonnegative natural number $j$, and multiindices $\alpha,\beta$ there are increasing functions of time
\begin{equation}\label{constAB}
A_{j,\alpha,\beta}, B_{j,\alpha,\beta}:[0,T]\rightarrow {\mathbb R},
\end{equation}
and functions
\begin{equation}\label{constmn}
n_{j,\alpha,\beta}, 
m_{j,\alpha,\beta}:
{\mathbb N}\times {\mathbb N}^d\times {\mathbb N}^d\rightarrow {\mathbb N},
\end{equation}
such that 
\begin{equation}\label{pxest}
\begin{array}{ll}
{\Bigg |}\frac{\partial^j}{\partial t^j} \frac{\partial^{|\alpha|}}{\partial x^{\alpha}} \frac{\partial^{|\beta|}}{\partial y^{\beta}}p(t,x,y){\Bigg |}\\
\\
\leq \frac{A_{j,\alpha,\beta}(t)(1+x)^{m_{j,\alpha,\beta}}}{t^{n_{j,\alpha,\beta}}}\exp\left(-B_{j,\alpha,\beta}(t)\frac{(x-y)^2}{t}\right)
\end{array}
\end{equation}
Moreover, all functions (\ref{constAB}) and  (\ref{constmn}) depend on the level of iteration of Lie-bracket iteration at which the H\"{o}rmander condition becomes true.
\end{thm}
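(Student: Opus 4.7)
The plan is to prove Theorem \ref{stroock} via the Malliavin calculus framework originally developed by Kusuoka and Stroock. First I would realize the diffusion $X_t = X_t^x$ solving (\ref{stochm}) on the standard Wiener space and, using the smoothness and linear growth of the coefficients $\sigma_{ij}$, show by iterating linear SDEs for the Jacobian flow that $X_t \in \mathbb{D}^{\infty}({\mathbb R}^d)$, i.e.\ $X_t$ has Malliavin derivatives of every order with finite $L^p$ moments for every $p$. Polynomial bounds in $x$ for these moments give the factors $(1+x)^{m_{j,\alpha,\beta}}$ at the end.

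The heart of the argument is then to analyze the Malliavin covariance matrix
\begin{equation}
\gamma_t^{ij} \;=\; \sum_{k=1}^{d}\int_0^t (D_s^k X_t^i)(D_s^k X_t^j)\, \mathrm{d}s,
\end{equation}
and to show that under the H\"ormander condition $\det \gamma_t^{-1}$ lies in every $L^p(\Omega)$. I would follow Kusuoka--Stroock's approach: expand $\langle \gamma_t \xi,\xi\rangle$ via stochastic Taylor expansion and apply Norris's lemma iteratively, so that the quadratic form at time $t$ inherits a lower bound of order $t^{N(k)}|\xi|^2$ from the span of brackets up to depth $k$. A covering argument on the sphere, together with the assumption that $H_x = {\mathbb R}^d$ pointwise, then promotes this to the uniform estimate $\|\gamma_t^{-1}\|_{L^p} \le C\, t^{-N(k,p)}$. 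The exponent $N(k,p)$ is exactly what later shows up as $n_{j,\alpha,\beta}$ in (\ref{pxest}), and it depends on the bracket depth, which is the source of the last sentence of the theorem.

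With non-degeneracy in hand, the integration-by-parts formula of Malliavin calculus gives, for every smooth bounded $\varphi$ and every multiindex $\beta$,
\begin{equation}
\E\bigl[\partial^{\beta}\varphi(X_t^x)\bigr] \;=\; \E\bigl[\varphi(X_t^x)\, H_{\beta}(X_t^x,1)\bigr],
\end{equation}
where $H_{\beta}$ is a polynomial expression in the entries of $\gamma_t^{-1}$ and in the iterated Malliavin derivatives of $X_t^x$. H\"older's inequality combined with the $L^p$ bounds on $\gamma_t^{-1}$ and $D^{\ell}X_t^x$ yields $\|H_{\beta}\|_{L^p} \le A_{\beta}(t)(1+x)^{m_{\beta}}/t^{n_{\beta}}$. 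Standard duality then produces the density $p(t,x,\cdot)$ with all $y$-derivatives controlled by (\ref{pxest}). The $x$-derivatives are obtained by differentiating the stochastic flow $x\mapsto X_t^x$, which is $C^{\infty}$ by the standard theory of SDEs with smooth coefficients, and $t$-derivatives come from the Kolmogorov PDE $\partial_t p = L^{\ast} p$ applied iteratively, absorbing space derivatives into time derivatives at the cost of further powers of $t^{-1}$ and $\gamma_t^{-1}$.

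The sub-Gaussian factor $\exp\bigl(-B_{j,\alpha,\beta}(t)(x-y)^2/t\bigr)$ is then extracted by the standard exponential-tilting trick: one inserts $e^{\lambda\cdot(X_t^x-y)}$ into the integration-by-parts identity, controls the resulting Girsanov-type exponential via the boundedness (modulo linear growth) of the coefficients, and optimizes over $\lambda\in {\mathbb R}^d$. The main obstacle will clearly be the quantitative inverse-moment bound for $\gamma_t$, since the iteration of Norris's lemma, with the explicit tracking of the constants $N(k,p)$, is the delicate step; everything else is comparatively routine consequences of the Malliavin calculus machinery together with Gronwall-type estimates for the flow and its derivatives.
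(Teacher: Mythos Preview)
Your outline is correct and is essentially the original Kusuoka--Stroock argument via Malliavin calculus: smoothness of $X_t^x$ in $\mathbb{D}^{\infty}$, $L^p$-invertibility of the Malliavin covariance $\gamma_t$ via iterated applications of Norris's lemma under the H\"ormander condition, integration by parts to produce and differentiate the density, and exponential tilting for the Gaussian tail. There is no genuine gap in what you sketch.

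The paper, however, deliberately takes a different route. Theorem~\ref{stroock} is quoted there as the Kusuoka--Stroock result, and the paper's contribution is to \emph{recover} it analytically, without Malliavin calculus, as a by-product of the proof of Theorem~\ref{kampen}. The key steps in the paper are: (i) in the reduced H\"ormander case, show via Sard-type arguments that the set $A_0$ of elliptic degeneracies of $(a_{ij})$ has Lebesgue measure zero, then build the density by a Levy/parametrix expansion with leading term the Gaussian $N_0$ frozen at $y$ (set to zero on $A_0\cap\delta$), and read off the estimates (\ref{pxest}) directly from the Gaussian structure of $N_0$ and the recursive correction $\phi$; (ii) in the general H\"ormander case, construct the leading term locally by a Trotter product of a lower-dimensional diffusion with the drift flow ${\cal F}^{{\mathbb N}^c_k}_t$, which is where the polynomial factor $(1+x)^{m_{j,\alpha,\beta}}$ enters, and then run the same parametrix expansion; (iii) patch local densities to a global one via a locally finite cover and partitions of unity.

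What each approach buys: your Malliavin route is the standard one, is well documented, and makes the dependence on the bracket depth transparent through the iteration count in Norris's lemma. The paper's parametrix/Trotter route is purely PDE-theoretic, avoids stochastic analysis entirely, gives explicit local series representations of $p$, and---crucially for the paper's purposes---extends naturally to the \emph{weak} H\"ormander setting (Lipschitz coefficients, H\"ormander condition only on a dense set), where Malliavin calculus in its usual form does not directly apply.
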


The theorem in (\ref{stroock})  is also sometimes formulated in a probabilistic manner. We note
\begin{cor}
	In the situation of (\ref{stroock}) above, solution $X_t^x$ starting at $x$ is in the standard Malliavin space $D^{\infty}$, and there are constants $C_{l,q}$ depending on the derivatives of the drift and dispersion coefficients such that for some constant $\gamma_{l,q}$
\begin{equation}\label{xprocessest}
|X_t^x|_{l,q}\leq C_{l,q}(1+|x|)^{\gamma_{l,q}}.
\end{equation}
Here $|.|_{l,q}$ denotes the norm where derivatives up to order $l$ are in $L^q$ (in the Malliavin sense).
\end{cor}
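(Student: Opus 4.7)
The plan is to combine the classical Malliavin calculus for stochastic differential equations driven by smooth coefficients with a moment induction on the order of Malliavin differentiation. Note that this corollary is not a hypoellipticity statement but rather a pathwise regularity statement about the flow of (\ref{stochm}); the H\"ormander condition plays no role, only the assumption $\sigma_{0i},\sigma_{ij}\in C^{\infty}_{lb}$ does.

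First I would record the baseline moment estimate. The Burkholder-Davis-Gundy inequality combined with Gr\"onwall's lemma applied to (\ref{stochm}), using only the linear growth of the coefficients, yields
$$\E\sup_{s\leq T}|X_s^x|^q \ \leq \ \tilde{C}_q\,(1+|x|)^q,$$
which is the $l=0$ case of (\ref{xprocessest}) with $\gamma_{0,q}=1$.

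Next I would pass to the Malliavin derivatives iteratively. Under the $C^{\infty}_{lb}$ assumption, the standard theory of Malliavin calculus for SDEs (as in Nualart's monograph or \cite{KS}) shows that $X_t^x\in D^{\infty}$, and the $k$-th Malliavin derivative $D^k X_t^x$ solves a linear SDE whose coefficients are polynomials in the partial derivatives of $\sigma_{0i}$ and $\sigma_{ij}$ evaluated along the trajectory, with strictly lower-order Malliavin derivatives entering as inhomogeneities. The crucial observation is that every partial derivative of order $\geq 1$ of the coefficients is \emph{bounded}, so a further BDG-plus-Gr\"onwall step controls the $L^q$-norms of $D^k X_t^x$ in terms of $L^{q'}$-norms of lower-order Malliavin derivatives, with constants independent of $x$. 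An induction on $k$ then collapses all $x$-dependence onto the baseline moment bound and produces the polynomial estimate $|X_t^x|_{l,q}\leq C_{l,q}(1+|x|)^{\gamma_{l,q}}$.

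The main obstacle, and the only real content of the argument, is the bookkeeping: one must verify that the recursion through the variational SDEs introduces only \emph{polynomial} and not exponential dependence on $|x|$. This is precisely where the boundedness of $\partial^{\alpha}\sigma$ for $|\alpha|\geq 1$ is essential; without it, Gr\"onwall would contribute factors of the form $\exp(c|x|)$ at each iteration. Since the only source of $x$-dependence is the trajectory $X_t^x$ itself entering through the initial condition of the variational equations, the exponent $\gamma_{l,q}$ is determined by how often $X_t^x$ reappears as an inhomogeneity in the recursion and remains finite for each fixed $l$ and $q$.
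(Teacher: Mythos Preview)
Your argument is correct and is in fact the standard route to this estimate in the Malliavin calculus literature. However, there is nothing to compare it against: the paper does not prove this corollary at all. It is stated immediately after Theorem~\ref{stroock} with the phrase ``The theorem in (\ref{stroock}) is also sometimes formulated in a probabilistic manner. We note'' and is then left without proof; the subsequent paragraphs are a heuristic discussion of the Lie-bracket phenomenon (the infinitesimal-square computation for $X_{t_1},\dots,X_{t_4}$), not an argument for the $D^{\infty}$ regularity or the polynomial bound. The corollary is treated as a known restatement of the Kusuoka--Stroock result, with the implicit reference to \cite{KS}.

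Your observation that the H\"ormander condition plays no role here, and that only $\sigma_{0i},\sigma_{ij}\in C^{\infty}_{lb}$ matters, is exactly right and worth emphasising: the corollary is a flow-regularity statement, not a hypoellipticity statement. The BDG--Gr\"onwall induction you outline, with the key point that boundedness of $\partial^{\alpha}\sigma$ for $|\alpha|\geq 1$ prevents exponential blow-up in the recursion, is precisely how this is done in Nualart's monograph and in \cite{KS} itself. So your proposal supplies a proof where the paper offers none.
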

Before we turn to our different point of view let us dwell a little on this probabilistic side.
The main phenomenon behind this statement,  is that random walks around infinitesimal cubes of dimension $n$ of diffusions of H\"{o}rmander type produce drifts which involve iterated Lie-algebra commutators of the vector fields which in turn define parts of first order coefficients of the diffusion. Essentially you may consider a Skorohod-diffusion
\begin{equation}\label{diffhoer}
dX_{t}=V_1(X_t)d\circ W_1+V_2(X_t)d \circ W_2,
\end{equation}
and consider a path of the Brownian motion which describes a square of  infinitesimal length $\delta:=\sqrt{\delta t}$, lets say in a calculus which has explicit infinitesimals in its r\'{e}pertoire
(the functional analytic calculus of Connes or the nonstandard calculus of Robinson are examples (cf. \cite{K})). Then following this path of $(W_1,W_2)$ one time around the square we get 
\begin{itemize}
\item[i)] Taylor expansion:
\begin{equation}
 X_{t_1}=x+V_1(x)\delta+DV_1(x)V_1(x)\frac{\delta }{2}+\mbox{h.o.}
\end{equation}
\item[ii)] Taylor expansion second step in direction of second coord.:
\begin{equation}
\begin{array}{ll}
X_{t_2}=X_{t_1}+V_2(X_{t_1})\delta +DV_2(X_{t_1})V_2(X_{t_1})\frac{\delta^2}{2}+\mbox{h.o.}\\
\\
=x+V_1(x)\delta +DV_1(x)V_1(x)\frac{\delta^2}{2}\\
\\
+V_2(x)\delta +DV_2(x)V_1(x)\delta^2+DV_2(x)V_2(x)\frac{\delta^2}{2}+\mbox{h.o.}
\end{array}
\end{equation}
\item[iii)] Taylor expansion second step in direction of second coord.:
\begin{equation}
\begin{array}{ll}
X_{t_3}=X_{t_2}-V_1(X_{t_2})\delta +DV_1(X_{t_2})V_1(X_{t_2})\frac{\delta^2}{2}+\mbox{h.o.}\\
\\
=x+V_2(x)\delta +[V_2,V_1](x)\delta^2+\mbox{other terms}
\end{array}
\end{equation}
\item[iv)] Taylor expansion second step in direction of second coord.:
\begin{equation}
\begin{array}{ll}
X_{t_4}=X_{t_3}-V_2(X_{t_3})\delta +DV_2(X_{t_3})V_2(X_{t_3})\frac{\delta^2}{2}+\mbox{h.o.}\\
\\
=x+[V_2,V_1](x)\delta^2+\mbox{h.o.}
\end{array}
\end{equation}
\end{itemize}
Iteration of this procedure leads to iterated Lie-brackets which define the drift of the corresponding diffusions. This indicates that the diffusion in (\ref{diffhoer}) fills the whole space defined by the H\"{o}rmander condition, and this suggests that a density should exist which is continuous with respect to the Lebegues measure.
Next to the extension mentioned in this paper we provide an elementary analytic proof of the result in \cite{KS}. Sticking to a probabilistic point of view a little longer we may define the H\"{o}rmander condition (more precisely) recursively. W may define
\begin{equation}
H_x:=\cup_{k=0}^{\infty} H^k_x,
\end{equation}
where
\begin{equation}
H^0_x=\mbox{span}\left\lbrace \sigma_1(x),\cdots ,\sigma_m(x)\right\rbrace ,
\end{equation}
($\sigma_i$ being the columns of the diffusion matrix $\sigma$), and $H^{k+1}_x$ is defined recursively in terms of $H^k_x$ by
\begin{equation}
H^{k+1}_x:=\left\lbrace \left[\sigma_0,f \right]_x, \left[\sigma_1,f \right]_x,\cdots ,\left[\sigma_m,f \right]_x|f\in H^k_x\right\rbrace .
\end{equation}
Here, for $f,g\in C^1\left({\mathbb R}^n,{\mathbb R}^n\right) $
\begin{equation}
\left[f,g \right]_x:=f(x)\nabla g(x)-g(x)\nabla f(x) 
\end{equation}
In this case we also say that $H_x$ is generated by the functions $\sigma_0,\sigma_1,\cdots ,\sigma_m$ and we write
\begin{equation}
H_x=H_x\left[\sigma_0,\sigma_1,\cdots ,\sigma_m \right]. 
\end{equation}
This is indeed an equivalent statement of the H\"{o}rmander condition. 
There are several observations here which lead us to a weak formulation. The strong regularity conditions of the H\"{o}rmander condition are imposed by the need of derivatives of arbitrary order in order to formulate the condition. However, the proof by Malliavin calculus shows that the essential condition for existence of a density is the $L^p$-invertibility of the Malliavin covariance matrix for $p\geq 1$. Below we shall see that this condition has its natural analytic counter part if we look at extension of classical constructions of the fundamental solution. Since the H\"{o}rmader condition is formulated as a bunch of local pointwise conditions, it is natural to ask if weak forms of this conditions are available where $L^p$-invertability for all $p\geq 1$ of the Malliavin covariance matrix or an analytic counter part of this condition is still preserved. A weak formulation or distributional formulation of the H\"{o}rmander condition leads  nowhere, since the condition looses its pointwise sense and becomes rather meaningless for higher order derivatives. Since the H\"{o}rmander condition is formulated pointwise it makes sense to look for a subset of points of ${\mathbb R}^n$ where it should hold in order that a density exists. We may loose hypoellipticity, i.e., some regularity. Two features of parabolic equations of second order seem fitting. Constructions of densities by weakly singular integrals as in classical methods naturally require Lipschitz continuity and $L^p$ invertibilty of matrices related to the diffusion matrix. Further, we may approximate functions with lower regularity, e.g. with Lipschitz regularity by functions which are smooth pointwise. Furthermore, diffusion equations are stable in the sense that a small perturbation of the coefficients leads to small perturbation of the solution.
We often have a situation where coefficients are globally only Lipschitz but are $C^{\infty}$ almost everywhere. Think of a multivariate stochastic volatility models in mathematical finance, for example. Anyway, it turns out that a weak H\"{o}rmander condition is sufficient in order to construct classical densities. Hypoellipticity is lost in general, but that is another matter.
 
From a probabilistic point of view we may equivalently say that the weak H\"{o}lder condition is satisfied for a diffusion with coefficients $v_{ij}$ as in (\ref{vcoeff}) and vector fields as in \ref{vvec} if
\begin{equation}
D:=\left\lbrace x| H_x\left[\sigma_0,\sigma_1,\cdots ,\sigma_m \right]={\mathbb R}^n\right\rbrace 
\end{equation}
is dense in ${\mathbb R}^n$, i.e. if $\overline{D}={\mathbb R}^n$, where the bar-superscript denotes again the closure with respect to the standard topology, and where $H_x$ is defined as in (\ref{Hoergenx}) above.
We note that if the weak H\"{o}rmander contion holds on $ K$, then for all $x$ in ${\mathbb R}^n$ there sequence of functions $f^k_i\in C^{\infty}\left({\mathbb R}^n,{\mathbb R}^n\right), 0\leq i\leq m $ such that for all $0\leq i\leq n$
\begin{equation}
\lim_{k\uparrow \infty}\sup_{y\in K}|\sigma_i(y)-f^k_i(y)|=0
\end{equation}
and where for each $k$ we have 
\begin{equation}
{\mathbb R}^n=H_x\left[f_0^k,\cdots,f_m^k \right]. 
\end{equation}
Depending on the conditions on the coefficients we get less or more regularity.
In the next section we consider two essential examples which illustrate the meaning of the H\"{o}rmander's result and our way to prove it. In the final section of this paper we prove the extension of H\"{o}rmander's result.  

\section{Two essential examples (analysis of the H\"{o}rmander condition)}

Consider the equation
\begin{equation}\label{ex1}
\frac{\partial u}{\partial t}-\lambda_2\frac{\partial^2 u}{\partial x_2^2}+x_2\mu_1\frac{\partial u}{\partial x_1}=0
\end{equation}
for some constants $\lambda_2,\mu_1>0$. Since the coefficients are linear the H\"{o}rmander space can be determined after one iteration of the Lie-bracket recursion. We have
\begin{equation}
H:=\mbox{span}\left\lbrace (0,\lambda_2)^T,(\lambda_2\mu_1,0\right\rbrace={\mathbb R}^2=H_x 
\end{equation}
independently of the argument $x$. H\"{o}rmander's result tells us that there is a density. It shows this in any case for bounded domains, but linear growth of the coefficient functions is sufficient to have the result also for unbounded domains.
This example shows that the diffusion can be degenerated to a linear subspace while we still have a density. The reason is that we have a smoothing diffusion on a subspace $\left\lbrace  x_1=0\right\rbrace $ while the drift term implies some rotation. Let us have a closer look at this. Consider a rotation $D=(d_{ij})$, i.e., $2\times 2$ matrix with $DD^T=1$, and the related coordinate transformation $y=Dx$ ($x=D^{-1}y$), where for $x=(x_1,x_2)$ and $y=(y_1,y_2)$ we have
\begin{equation}
\begin{array}{ll}
y_1=d_{11}x_1+d_{12}x_2,~y_2=d_{21}x_1+d_{22}x_2.
\end{array}
\end{equation}
We get for $u(t,x)=v(t,y)$
\begin{equation}
\begin{array}{ll}
\frac{\partial u}{\partial x_1}=\frac{\partial v}{\partial y_1}d_{11}+\frac{\partial v}{\partial y_2}d_{21}\\
\\
\frac{\partial u}{\partial x_2}=\frac{\partial v}{\partial y_1}d_{12}+\frac{\partial v}{\partial y_2}d_{22},
\end{array}
\end{equation}
and
\begin{equation}
\frac{\partial^2 u}{\partial x_2^2}=d_{12}^2\frac{\partial^2 v}{\partial y_2^2}+2d_{22}d_{12}\frac{\partial^2 v}{\partial y_1\partial y_2}d_{22}d_{12}+d^2_{22}\frac{\partial^2 v}{\partial y_2^2}.
\end{equation}
Hence
\begin{equation}
\begin{array}{ll}
\frac{\partial v}{\partial t}-\lambda_2\left(d_{12}^2\frac{\partial^2 v}{\partial y_1^2}+2d_{22}d_{12}\frac{\partial^2 v}{\partial y_1\partial y_2}+d^2_{22}\frac{\partial^2 v}{\partial y_2^2}\right)\\
\\
 +\left(D^{-1}y\right)_2 \mu_1\left(\frac{\partial v}{\partial y_1}d_{11}+\frac{\partial v}{\partial y_2}d_{21}\right) =0,
\end{array}
\end{equation}
where $\left(D^{-1}y\right)_2$ denotes the second component of $D^{-1}y$. As we move through all rotations we observe that for $\lambda_2>0$ the symbol of the rotated operator 
\begin{equation}
\lambda_2\left(d_{12}^2\xi^2_1+2d_{22}d_{12}\xi_1\xi_2+d^2_{22}\xi_2^2\right) 
\end{equation}
is elliptic, and for for almost all rotations $D$ it degenerates only at an exceptional set of points. It is only for $d_{12}=0$ or $d_{22}=0$ (rendering $d_{22}=1$ and $d_{12}=1$) that the degeneracy occurs on a whole line $\xi_2=0$ or, respectively, $\xi_1=0$. 
Now we may solve this equation locally by iteratively solving the vector field equation for a function $v^f$ with
\begin{equation}\label{vecf}
\frac{\partial v^{f}}{\partial t} +\left(D^{-1}y\right)_2 \mu_1\left( \frac{\partial v^{f}}{\partial y_1}d_{11}+\frac{\partial v^{f}}{\partial y_2}d_{21}\right) =0
\end{equation}
and the diffusion equation for a function $v^d$ with
\begin{equation}\label{diff}
\frac{\partial v^{d}}{\partial t}-\lambda_2\left(d_{12}^2\frac{\partial^2 v^{d}}{\partial y_2^2}+2d_{22}d_{12}\frac{\partial^2 v^d}{\partial y_1\partial y_2}+d^2_{22}\frac{\partial^2 v^d}{\partial y_2^2}\right) =0,
\end{equation}
where we may use a Trotter product formula. The vector field locally moves data via a flow along the characteristic of the first order equation. Now as the first order term in (\ref{ex1}) depends on the variable $x_2$ for noncritical valued of the solution of (\ref{vecf}) we have a smoothing effect of the diffusion in both directions. We can look at this also this way: start with the point $(x_1,x_2)$ and apply the flow $\cal{F}^{e_1}_t$ of the vector field equation (\ref{vecf}). After small time the initial point is transported to $(\cal{F}^{e_1}x,1,x_2)$, where ${\cal F}^{e_1}x_1$ depends on $x_2$ (except for some exceptional set of critical points). Then the diffusion part $\frac{\partial u}{\partial t}-\lambda_2\frac{\partial^2 u}{\partial x_2^2}$ of the operator has it smoothing effect not only on the variable $x_2$ but also on ${\cal F}^{e_1}x_1$  which depends effectively on $x_2$ in another small time step. The associated Trotter product formula then proves that this effect is preserved for Trotter-type iterations in the limit. This 'phenomenological background' of the classical H\"{o}rmander theorem which makes this intuition precise (we shall make it precise in our own way below).
Our second essential example is about second order degenerate parabolic equations which satisfy a  reduced H\"{o}rmander condition.  Theses are second order equations without a 'drift vector field' which satisfy the H\"{o}rmander condition. More precisely, if the problem
\begin{equation}
	\label{hoer2*}
	\left\lbrace \begin{array}{ll}
		\frac{\partial q}{\partial t}=\frac{1}{2}\sum_{i=1}^mV_i^2q+\tilde{V}_0q,\\
		\\
		q(0,x;y)=\delta_y(x),
	\end{array}\right.
\end{equation}
is equivalent to
\begin{equation}
\label{hoerdiff}
	\left\lbrace \begin{array}{ll}
		\frac{\partial q}{\partial t}=\frac{1}{2}\sum_{i,j=1}^n\left( \sigma\sigma^T\right) _{ij}\frac{\partial^2}{\partial x_i\partial x_j}q,\\
		\\
		q(0,x;y)=\delta_y(x),
	\end{array}\right.
\end{equation}
and the vector fields $\tilde{V}_0,V_1,\cdots ,V_m$ satisfy the (weak) H\"{o}rmander condition, then we say that a reduced H\"{o}rmander condition is satisfied. Starting with a problem as in (\ref{hoer1}) we can move to a reduced problem (\ref{hoer2*}) by a 'drift shift' $-V_0+\tilde{V}_0$. The reduced H\"{o}rmander condition is then expressed via the associated recursion for $x\in {\mathbb R}^n$ with  
\begin{equation}
H^{0,{\tiny \mbox{red}}}_x=H^0_x=\mbox{span}\left\lbrace \tilde{V}_0(x),V_1(x),\cdots ,V_m(x)\right\rbrace ,
\end{equation}
 and for $k+1\geq 1$ with $H^{k,\mbox{red}}_x$ by
\begin{equation}
H^{k+1,{\tiny \mbox{red}}}_x:=\left\lbrace \left[V_1,f \right]_x, \left[V_2,f \right]_x,\cdots ,\left[V_m,f \right]_x|f\in H^k_x\right\rbrace .
\end{equation}
We say that a reduced H\"{o}rmander condition is satisfied at $x\in {\mathbb R}^n$ if
\begin{equation}
H^{{\tiny \mbox{red}}}_x=\cup_{k\geq 0}H^{k,\mbox{red}}_x={\mathbb R}^n.
\end{equation}
Furthermore we say that the reduced H\"{o}rmander condition is satisfied in a domain $U\subseteq {\mathbb R}^n$ if
\begin{equation}
H^{{\tiny \mbox{red}}}_x={\mathbb R}^n \mbox{ for all }x\in U.
\end{equation}
For these vector fields we can observe that the classical H\"{o}rmander condition implies that the points of non-degeneracy have to be dense. In the case of a second order parabolic equation which satisfies a reduced H\"{o}rmander condition it is natural to write down a Levy expansion formally and then to look which additional conditions have to be satisfied in order that the expansion converges to a classical solution. You observe that the second order coefficient matrix should be invertible and the higher order terms in the formal Levy expansion suggest that integrals over integer powers of the inverse of the second order coefficient matrix should also be invertible. This is quite close to the assumed $L^p$-invertability of the Malliavin covariance matrix for all $p\geq 1$ which is the usual condition in the context of stochastic analysis. It is the first step in our proof to make this statement precise and show that the weak H\"{o}rmander condition together with a certain $L^p$ invertibility for $p\geq 1$ is indeed sufficient in order to if the weak H\"{o}rmander condition is satisfied.    

\section{Proof of main theorem 1.3}
First we consider diffusions without drift terms, and where the reduced H\"{o}rmander condition holds. This condition is much stronger than the classical H\"{o}rmander condition where the support of the diffusion coefficients may be a located in a much smaller subspace of ${\mathbb R}^n$. In the case of the stronger reduced H\"{o}rmander condition the elliptic degeneracies are of Lebesgue measure zero, while this is not true for the full classical H\"{o}rmander condition. 
First we prove that in the case of a reduced H\"{o}rmander condition we find for each argument $x\in {\mathbb R}^n$ a positive small number $\epsilon >0$ and a ball $B_{\epsilon}(x)$ of radius $\epsilon >0$ around $x$ such that a density $(t,x,y)\rightarrow p(t,x;y)$ exists on $(0,\infty)\times B_{\epsilon}(x)\times B_{\epsilon}(x)$. We do this by a Levy type expansion using the regularity of of the diffusion coefficients, and where the leading term is modified. Using this representation of local densities we also obtain local Gaussian estimates for the densities and their derivatives, i.e., a local form of the estimate (\ref{pxest1}) above in the case of the reduced H\"{o}rmander condition.

Then we extend this proof of local existence of densities in the general case of diffusions with drift term which satisfy the classical H\"{o}rmander condition. In this step the expansive representations of the density of the reduced case do not hold in general. However, around each $x$ we find a ball where in a lower dimensional subspace we can obtain a similar expansive local representation of the density. The first order terms then have the effect of a infinitesimal rotation such that the diffusion becomes effective in all space direction. We make these observations precise and construct a density by a local Trotter product formula. Local estimates of the form (\ref{pxest1}) are then obtained following this construction. In a third step we design a scheme which extends local densities to global densities in small time. Using the semigroup property  is then rather straightforward to conclude that there exists a density which is global in time. Following the whole construction we can then establish regularity and upper bounds in the sense of Kusuoka-Stroock in the case of the classical H\"{o}rmander condition straightforwardly. Finally we show that in the case of the weak H\"{o}rmander condition there is still a classical density, i.e., a density in $C^{1,2}\left((0,\infty)\times {\mathbb R}^n \right)$. In the case of the stronger form of the weak H\"{o}rmander condition we show that the Gaussian estimates of the form (\ref{pxest1}) are still valid up to second order derivatives in space and first order derivatives in time.  Next we implement this program.

\subsection{Existence of local densities in the reduced case}

If the reduced H\"{o}rmander condition holds, then first we proof that in any neighborhood $B_{\epsilon}(x)$ of a point $x\in {\mathbb R}^n$ we can find a point at which the second order coefficient matrix of a reduced H\"{o}rmander diffusion is invertible, and such that the elliptic degeneracies, i.e., the set of points where $\mbox{det}\left(a_{ij}\right)$ is zero is itself of Lebesgues measure zero. Recall that we speak of a reduced H\"{o}rmander diffusion if the reduced H\"{o}rmander condition is satisfied. The idea is to use this observation in order to construct a leading term of a generalised local Levy expansion.
From the diffusion equation above which satisfies the H\"{o}mander condition at each point $x\in {\mathbb R}^n$ we have weak ellipticity in the sense that
\begin{equation}
(a_{ij})=\sigma\sigma^T\geq 0
\end{equation}
At each point we can diagonalize the (w.l.o.g.) symmetric matrix $(a_{ij})=\sigma\sigma^T$ such that we get
a function
\begin{equation}
x\rightarrow \Lambda(x)=D(x)\sigma\sigma^T(x)D^T(x)
\end{equation}
where $\Lambda(x)=\mbox{diag}(\lambda_1(y),\cdots ,\lambda_n(x))$ is a diagonal matrix. We have that $x\rightarrow \Lambda(x)\in C^{\infty}$. 
Next we have  
\begin{lem}\label{lem1}
Assume that the reduced H\"{o}rmander condition is satisfied and let $x\in {\mathbb R}^n$.
Let $\mbox{diag}(\lambda_i(y))$ the diagonal form of the symmetric second order coefficient matrix of the reduced H\"{o}rmander diffusion. Then for each $\epsilon >0$ and each ball $B_{\epsilon}(x)$ of radius $\epsilon>0$ around $x$ there is a $y\in B^{\epsilon}(x)$ such that
\begin{equation}
\lambda_i(y)>0 \mbox{ for all }1\leq i\leq n.
\end{equation}
\end{lem}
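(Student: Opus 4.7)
The plan is to argue by contradiction, combining upper semi-continuity of matrix rank with a Frobenius-type analysis. Suppose some ball $B_\epsilon(x)$ contains no point at which all eigenvalues of $a = \sigma\sigma^T$ are strictly positive. Since $a(y) \geq 0$ throughout, this is equivalent to $\det a(y) = 0$ on $B_\epsilon(x)$, i.e.\ to $V_1(y),\ldots,V_m(y)$ failing to span $\mathbb{R}^n$ at every $y \in B_\epsilon(x)$.

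First I would pass to a sub-ball on which the rank of $\{V_1,\ldots,V_m\}$ is constant. Since $\{y : \mathrm{rank}(V_1(y),\ldots,V_m(y)) \geq k\}$ is open for each $k$, the rank attains a local maximum $r < n$ and is constant on an open sub-ball $B' \subset B_\epsilon(x)$. On $B'$ the $V_i$ span a smooth constant-rank distribution $\mathcal{D}$ of dimension $r < n$.

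Next I would exploit the reduced Hörmander condition on $B'$. At each $y \in B'$ the iterated brackets $[V_{i_1},[V_{i_2},[\cdots,V_{i_k}]\cdots]]$ together with $H^0_y$ span $\mathbb{R}^n$, yet the seed vectors $V_1(y),\ldots,V_m(y)$ all lie in the rank-$r$ distribution $\mathcal{D}(y)$. In the cleanest case, where $\mathcal{D}$ is involutive on $B'$, the Frobenius theorem provides an $r$-dimensional foliation with $T\mathcal{L} = \mathcal{D}$, and all iterated brackets of the $V_i$ remain tangent to the leaves; once one verifies that $\tilde{V}_0$ also lies in $\mathcal{D}$ on $B'$ (via its definition as the drift-correction built from $V_i$ and $\nabla V_i$), the whole tower $H^{\mathrm{red}}_y$ is contained in $\mathcal{D}(y)$, contradicting $H^{\mathrm{red}}_y = \mathbb{R}^n$.

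The main obstacle is the non-involutive case, in which some bracket $[V_i,V_j]$ escapes $\mathcal{D}$ pointwise while the distribution itself keeps constant rank $r$, so Frobenius does not apply directly. Here my plan is to show that an escaping bracket direction at $y_0 \in B'$ forces, via a Taylor expansion of the $V_i$ together with the commutator-of-flows interpretation of the Lie bracket, a perturbation of the span of $V_1(y),\ldots,V_m(y)$ at points $y$ arbitrarily close to $y_0$ that violates the constant-rank assumption on $B'$. Executing this cleanly using only the Lipschitz regularity of the coefficients and the layered structure of the reduced Hörmander tower is the delicate step, and the remainder of the construction of a local density in the reduced case rests on it.
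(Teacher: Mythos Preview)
Your Step 5 contains a genuine gap, and it is not merely technical. You claim that if some bracket $[V_i,V_j](y_0)\notin\mathcal D(y_0)$ while $\mathcal D=\mathrm{span}\{V_1,\ldots,V_m\}$ has constant rank $r$ on $B'$, then a Taylor/commutator-of-flows argument forces the pointwise span of $V_1(y),\ldots,V_m(y)$ to exceed $r$ near $y_0$. This is false: constant-rank distributions can be nowhere involutive. Take the Heisenberg fields $V_1=\partial_{x_1}$, $V_2=\partial_{x_2}+x_1\partial_{x_3}$ on $\mathbb R^3$. Then $\mathrm{rank}\{V_1,V_2\}\equiv 2$ globally, yet $[V_1,V_2]=\partial_{x_3}$ escapes $\mathcal D$ at every point. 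The commutator-of-flows identity moves \emph{points} transversally to the would-be leaves, but it does not enlarge the pointwise span of $V_1,V_2$; that span stays two-dimensional everywhere. So no rank contradiction is available, and your dichotomy ``involutive vs.\ rank-jumps'' does not exhaust the possibilities.

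Worse, the same example satisfies the reduced H\"ormander condition as defined in the paper: one computes $V_1^2+V_2^2=\partial_{x_1}^2+\partial_{x_2}^2+2x_1\partial_{x_2}\partial_{x_3}+x_1^2\partial_{x_3}^2$, which is already pure second order, so $\tilde V_0=0$; and $V_1,V_2,[V_1,V_2]$ span $\mathbb R^3$ at every point. Yet $a=\sigma\sigma^T$ has $\det a\equiv 0$, so there is \emph{no} $y$ with all $\lambda_i(y)>0$. Hence the lemma as stated cannot hold without an additional hypothesis (for instance, that the $V_i$ already span $\mathbb R^n$ on a dense set, or that $m\ge n$ with some genericity). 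The paper's own argument works in diagonalized coordinates and computes brackets of the ``columns'' $\sqrt{\lambda_i}\,e_i$, tacitly treating these as the vector fields; that calculation discards the $y$-dependence of the diagonalizing rotation $D(y)$ and, in effect, replaces the original $V_i$ by a different family. Both arguments break on the Heisenberg example for the same underlying reason: the missing direction is produced by the \emph{variation} of the $V_i$ across points, not by any increase in their pointwise span.
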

\begin{proof}
As $\lambda_i(y)\geq 0$ for all $y\in B_{\epsilon}(x)$ by weak ellipticity every point where $\lambda_i(y)=0$ is critical such that we can apply Sard's theorem to
\begin{equation}
y\rightarrow \mbox{det}(a_{ij})=\Pi_{i=1}^n\lambda_i(y)
\end{equation}
since we have sufficient regularity of the functions $\lambda_i(y)$. However, since we assume this regularity only on a dense set later, here is a more elementary consideration.
Assume for the contrary that there is an index $i$ and a ball $B_{\epsilon}(x)$ such that 
$\lambda_i(y)=0$ for all $y\in B_{\epsilon}(x)$. The matrices $\mbox{diag}(\lambda_i(y))$ we have on nonzero entry in each column such that the Lie bracket operations of the H\"{o}rmander condition simplify. In order to produce a nonzero entry at $y$ after one Lie bracket iteration requires
\begin{equation}
0\neq -\frac{\partial \lambda_i(y)}{\partial x_j}\lambda_j(y)
\end{equation}
for some $j\neq i$, a contradiction to the assumption. This means that a nonzero entry will not appear in the $i$ column for iterations of the Lie bracket operation as well 
\end{proof}
Sard's theorem together with weak ellipticity and the H\"{o}rmander condition have another immediate consequence
\begin{lem}\label{lem2}
Assume that the reduced H\"{o}rmander condition is satisfied. Then the zeros of the map
\begin{equation}
y\rightarrow \mbox{det}\left(a_{ij}(y)\right)=\Pi_{i=1}^n\lambda_i(y) 
\end{equation}
are of Lebegues measure zero.
\end{lem}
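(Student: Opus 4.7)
The plan is to combine Sard's theorem, applied to the smooth nonnegative function $f(y) := \det(a_{ij}(y)) = \prod_{i=1}^n \lambda_i(y)$, with the density of the set $\{f > 0\}$ furnished by Lemma \ref{lem1}, and a rank stratification driven by the reduced H\"ormander condition. Weak ellipticity forces $f \geq 0$, so every point of $Z := f^{-1}(0)$ is a local minimum and hence a critical point of $f$; Lemma \ref{lem1} already shows that $Z$ is closed and nowhere dense in $\mathbb{R}^n$. My goal is to upgrade this to $|Z| = 0$.

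I would first stratify $Z = \bigcup_{k=0}^{n-1} Z_k$ by the rank of $a(y)$ and argue on each stratum separately. On $Z_k$ the columns $\sigma_1(y),\dots,\sigma_m(y)$ span only a $k$-dimensional subspace, so by the reduced H\"ormander condition the missing $n-k$ directions at $y$ must come from iterated Lie brackets of the $\sigma_j$. Writing such a bracket in a smoothly chosen local frame of $\ker a(y)$ and its orthogonal complement, I would extract a smooth scalar function built from a suitable minor of $a$ (or of a derivative of $a$) whose gradient is nonzero at $y$, and then apply the implicit function theorem to conclude that $Z_k$ is locally contained in a smooth submanifold of codimension at least one. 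A finite union of codimension-one pieces has $n$-dimensional Lebesgue measure zero, completing the argument.

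The main technical obstacle lies in this implicit-function step: the rank of $a$ and the ranks produced by successive Lie bracket iterations are only lower semicontinuous integer-valued functions, so one has to perform a further countable substratification of each $Z_k$ indexed by the level at which the H\"ormander condition first becomes true, and on each substratum choose the correct bracket to supply the defining function. Sard's theorem is then reapplied to the exceptional substratum on which the chosen defining function itself happens to degenerate; iterating this procedure produces a countable cover of $Z$ by smooth pieces of codimension at least one, from which $|Z| = 0$ follows.
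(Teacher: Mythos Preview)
The paper offers essentially no proof here: the lemma is introduced by the sentence ``Sard's theorem together with weak ellipticity and the H\"{o}rmander condition have another immediate consequence,'' leaning on the observation from the proof of Lemma~\ref{lem1} that every zero of $f(y)=\det(a_{ij}(y))\geq 0$ is a critical point of $f$. Your proposal goes considerably further, and with reason: Sard's theorem bounds the measure of the set of critical \emph{values}, not critical \emph{points}, so the paper's one-line invocation is at best a heuristic. Your rank stratification $Z=\bigcup_{k} Z_k$, the further substratification by H\"{o}rmander level, and the implicit-function reduction to a countable union of codimension-one pieces constitute a genuinely different and more honest route to $|Z|=0$.

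That said, the step you flag as the ``main technical obstacle'' is a real gap, not merely bookkeeping. From ``some iterated bracket supplies a missing direction at $y$'' one cannot directly extract ``some smooth minor of $a$ (or a derivative thereof) has nonzero gradient at $y$.'' For example, with $a=\operatorname{diag}(1,x_1^{2\ell})$ the reduced H\"{o}rmander condition holds at $x_1=0$ via an $\ell$-fold bracket of $V_1=\partial_1$ with $V_2=x_1^{\ell}\partial_2$, yet the relevant minor $x_1^{2\ell}$ and its first $2\ell-1$ derivatives all vanish there. Your iterated substratification is the right mechanism for absorbing this, but as written you have not specified, on each substratum, exactly which scalar defining function you use and why its gradient is nonvanishing there, nor why the exceptional sets produced at each stage are again covered at the next. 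In that sense both the paper and your proposal are sketches; yours is simply far more explicit about where the actual work lies and which ingredients are needed.
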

Now let 
\begin{equation}
A_0:=\left\lbrace x\in {\mathbb R}^n| \mbox{det}\left(a_{ij}(y)\right)=0\right\rbrace 
\end{equation}
be set of elliptic degeneracies of the diffusion matrix, and let
\begin{equation}
\delta:=\left\lbrace (x,y)|x,y\in {\mathbb R}^n~\&~x=y \right\rbrace
\end{equation}
be the set of points on the diagonal.
The two observation stated in lemma \ref{lem1} and lemma \ref{lem2} lead us to a candidate of a leading term of a local Levy type expansion of the form
\begin{equation}\label{Ndef}
\begin{array}{ll}
N(t,x;s,y):=\\
\\
\left\lbrace \begin{array}{ll}\frac{1}{\sqrt{4\pi \Pi_{i=1}^n\lambda_i(y) (t-s)}^n}\exp\left(-\sum_{i=1}^n\frac{\lambda^{-1}_{i}(y)(x_i-y_i)^2}{4 (t-s)} \right)
            ~\mbox{ if }y\not\in A_0\mbox{ or }(x,y)\not\in \delta\\
	    \\
	    0~\mbox{if }y\in A_0\mbox{ and }(x,y)\in \delta,
           \end{array}\right.
	   \end{array}
\end{equation}
where we understand that in the first alternative in the definition (\ref{Ndef}) we have for $y\in A_0\mbox{ and }x\not\in \delta$ that 
\begin{equation}
\frac{1}{\sqrt{4\pi \Pi_{i=1}^n\lambda_i(y) (t-s)}^n}\exp\left(-\sum_{i=1}^n\frac{\lambda^{-1}_{i}(y)(x_i-y_i)^2}{4 (t-s)} \right){\Big |}_{y\in A_0}=0,
\end{equation}
which is the natural limit. Note that this function has well-defined multivariate spatial and time derivatives of any order for $t>s$ and $x,y\in {\mathbb R}^n$.

Having observed this we can prove a local form of the main part of theorem 1.3 for problems without first order terms where a reduced H\"{o}rmander condition holds, i.e., we prove
\begin{lem}
Consider a class of Cauchy problems 
\begin{equation}
	\label{hoer2}
	\left\lbrace \begin{array}{ll}
		\frac{\partial q}{\partial t}=\sum_{i=1}^mV_i^2q+V_0q\\
		\\
		q(0,x;y)=\delta_y(x),
	\end{array}\right.
\end{equation}
which is equivalent to a pure diffusion problem
\begin{equation}
\label{hoerdiff0}
	\left\lbrace \begin{array}{ll}
		0=\frac{\partial q}{\partial t}-\sum_{i,j=1}^na_{ij}\frac{\partial^2}{\partial x_i\partial x_j}q\equiv L_0y\\
		\\
		q(0,x;y)=\delta_y(x),
	\end{array}\right.
\end{equation}
and assume that the classical H\"{o}rmander condition is satisfied for all $x\in {\mathbb R}^n$, i.e.,
\begin{equation}
H^{\tiny {\mbox{red}}}_x\left[V_1,\cdots ,V_n \right] ={\mathbb R}^n \mbox{ for all }x\in {\mathbb R}^n.
\end{equation}
Then there exists a density $p$ which is a solution of (\ref{hoerdiff0}), and such that for all $s\in [0,\infty)$ and $y\in {\mathbb R}^n$ with $t>s$ we have $(t,x)\rightarrow p(t,x;s,y)\in C^{1,2}\left(\left(0,\infty\right)\times {\mathbb R}^n  \right)$ and the Kusuoka-Stroock estimates are satisfied.
\end{lem}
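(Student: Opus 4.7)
The plan is to construct $p$ by a Levy (parametrix) type expansion in which the natural leading term is the modified Gaussian $N(t,x;s,y)$ defined in (\ref{Ndef}). Freezing the (diagonalised) second order coefficients at the target point $y$ produces a constant coefficient parabolic operator
\begin{equation}
L_0^y := \frac{\partial}{\partial t} - \sum_{i=1}^n \lambda_i(y)\frac{\partial^2}{\partial \tilde x_i^2}
\end{equation}
in the eigen-coordinates $\tilde x = D(y)x$, and $N$ is the fundamental solution of $L_0^y$ whenever $y\notin A_0$; the definition (\ref{Ndef}) extends this continuously by $0$ on the exceptional diagonal points. The first task will be to compute the residual
\begin{equation}
\Phi_1(t,x;s,y) := L_0 N(t,x;s,y) = \sum_{i,j=1}^n \bigl(a_{ij}(x) - a_{ij}^{(y)}\bigr)\partial^2_{x_ix_j} N(t,x;s,y),
\end{equation}
where $a^{(y)}$ is the frozen diffusion matrix in the eigen-coordinates based at $y$. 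The Lipschitz regularity of the coefficients $v_{ji}$ gives $|a_{ij}(x)-a_{ij}^{(y)}| \leq C|x-y|$, and this linear factor absorbs exactly one power of the singularity of $(t-s)^{-1}$ coming from two $x$-derivatives of $N$, producing the classical gain of a half-power of $(t-s)$ that makes the parametrix series converge.

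Next I would iterate in the standard way, setting $\Phi_{k+1} = \Phi_1 \ast \Phi_k$ (convolution in the intermediate space-time variable, restricted to the local ball $B_\epsilon(x_0)$ guaranteed by Lemma \ref{lem1}), and defining
\begin{equation}
p(t,x;s,y) := N(t,x;s,y) + \sum_{k=1}^\infty \int_s^t \!\!\int N(t,x;\tau,z)\,\Phi_k(\tau,z;s,y)\,dz\,d\tau.
\end{equation}
The key analytic estimate is a local Gaussian bound of the form $|N(t,x;s,y)| \leq C(t-s)^{-n/2}(\prod_i\lambda_i(y))^{-1/2}\exp(-\kappa |x-y|^2/(t-s))$ together with analogous bounds for its derivatives with the usual loss of a power of $(t-s)^{-1/2}$ per derivative. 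By Lemma \ref{lem2} the factor $(\prod_i\lambda_i(y))^{-1/2}$ is only integrable in $y$ a priori on the complement of the measure-zero set $A_0$, and Sard-type arguments localised in $B_\epsilon(x_0)$ (combined with Lemma \ref{lem1} to locate a non-degenerate reference point) will show that the $y$-integrals of $|\Phi_k|$ remain finite. Dominating the iterated convolutions by the standard beta-function induction yields absolute convergence and the usual Gaussian-times-polynomial-in-$x$ upper bounds on $p$ and its derivatives up to the orders claimed, which is the local form of (\ref{pxest1}).

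The hard part will be the integrability of $N$ and its $x$-derivatives across the degeneracy set $A_0$: even though $A_0$ is Lebesgue negligible by Lemma \ref{lem2}, a single eigenvalue vanishing to too high an order in the transverse direction could prevent $\lambda_i^{-1/2}(y)$ and $\lambda_i^{-1}(y)(x_i-y_i)^2$ from being integrable against the residual. I would address this by exploiting the reduced H\"ormander condition, which forces the eigenvalues to vanish at most to finite order in each transverse direction (otherwise all iterated Lie brackets would also vanish there, contradicting $H_x^{\mbox{\tiny red}}={\mathbb R}^n$ on the dense complement of $A_0$); this yields a local Łojasiewicz-type inequality controlling $\lambda_i(y)\geq c\,\mathrm{dist}(y,\{\lambda_i=0\})^{N}$ for a finite $N$, which is enough to give uniform $L^1_{\mathrm{loc}}$ bounds for the relevant negative powers.

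Once the series $p$ is shown to converge absolutely and uniformly on compact subsets of $(0,\infty)\times B_\epsilon(x_0)\times B_\epsilon(x_0)\setminus\{t=s\}$, standard parametrix bookkeeping shows that $L_0 p = 0$ classically (the defining telescoping identity $L_0 N + \Phi_1 - \Phi_1 = 0$ propagates through the series) and that $p(t,\cdot;s,y)\to\delta_y$ as $t\downarrow s$ in the distributional sense, because the leading term $N$ concentrates to $\delta_y$ and the correction terms vanish in this limit uniformly. Differentiability of the series in $(t,x)$ up to the order claimed follows by term-by-term differentiation, justified by the same Gaussian bounds. The resulting local Gaussian estimates on $p,\partial_t p,\partial^\alpha_x p,\partial^\beta_y p$ assemble into the local version of the Kusuoka--Stroock estimate (\ref{pxest1}), completing the proof of the lemma.
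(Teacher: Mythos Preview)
Your overall architecture---Levy/parametrix expansion with the modified Gaussian leading term $N$ from (\ref{Ndef}), freezing second-order coefficients at $y$, computing the residual $L_0N$, and iterating---is exactly the route the paper takes, and you have correctly singled out integrability across the degeneracy set $A_0$ as the only point where this differs from the classical uniformly elliptic parametrix.

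Where you diverge from the paper is in how you propose to resolve that difficulty. You want a \L{}ojasiewicz-type lower bound $\lambda_i(y)\geq c\,\mathrm{dist}(y,\{\lambda_i=0\})^{N}$, arguing that the reduced H\"ormander condition forces each $\lambda_i$ to vanish only to finite order. Two problems with this. First, even granting finite order, the order is not bounded by anything useful: with $V_1=\partial_1$, $V_2=x_1^{N}\partial_2$ in $\mathbb{R}^2$ the reduced H\"ormander condition holds (after $N$ brackets) while $\lambda_2=x_1^{2N}$, so $\lambda_2^{-1/2}=|x_1|^{-N}$ is not locally integrable in $x_1$ once $N\geq 1$. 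A bare \L{}ojasiewicz bound on $\lambda_i^{-1/2}$ therefore does \emph{not} yield the $L^1_{\mathrm{loc}}$ control you claim. Second, the finite-order statement itself, while plausible, is not proved in the paper and would need its own argument.

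The paper's resolution is more direct and does not separate the prefactor from the exponential. The point is that $N$ is self-regularising: for $y\notin A_0$ one has $D^\alpha_x D^\beta_y N = p_{\alpha\beta}\bigl((x-y)/t\bigr)N$ for a polynomial $p_{\alpha\beta}$, and
\[
\sup_{z\in\mathbb{R}^n}\Bigl|p_{\alpha\beta}(z)\exp\Bigl(-\tfrac12\sum_i \lambda_i^{-1}(y)z_i^2\Bigr)\Bigr|
\]
stays uniformly bounded as any $\lambda_i(y)\downarrow 0$, because the exponential in $\lambda_i^{-1}$ always beats the polynomial. This yields a Gaussian upper bound for $N$ and all its derivatives in which the only $\lambda$-dependence left is the normalising factor $\prod_i\lambda_i(y)^{-1/2}$ multiplied by the matching exponential---and that product integrates (in the convolution variable) to a constant independent of the $\lambda_i$, exactly as for a standard heat kernel. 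The paper then handles the time-singularities in the iterated convolutions by repeated partial integration in the intermediate time variable, trading powers of $(t-\sigma)^{-1}$ for additional spatial polynomial factors that are again absorbed by the Gaussian. No lower bound on $\lambda_i$ is ever invoked.

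So the fix is not to bound $\lambda_i^{-1/2}$ on its own, but to keep it paired with its exponential throughout; once you do that, the rest of your outline (beta-function induction, telescoping to $L_0p=0$, $\delta$-limit) goes through as written.
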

In order to prove this we reconsider the basic classical idea of constructing solutions for equations of type (\ref{hoerdiff0}). However, classically, such expansions are done for operators with a strictly elliptic spatial part. We next observe that we can extend the method to operators where elliptic degeneracies are of Lebegues measure zero. As we observed we have Lebegues measure zero for the set of degeneracies $A_0$ for diffusions, where a reduced H\"{o}rmander condition holds. The method easily extends to equations of with drift terms as well if a reduced H\"{o}rmander condition holds. However, in case of a full H\"{o}rmander condition the set of elliptic degeneracies is not of measure zero such that we cannot apply this method immediately. However, we shall see that we can define more involved leading terms and use the results which we obtain in the case of diffusions without drift. o let us consider the case (\ref{hoerdiff0}) first. We shall consider the additional effects of a nonzero drift term and a general H\"{o}rmander condition later. In order to solve a rather general equation like (\ref{hoerdiff0}) you may first solve a simplified approximative equation and the look for correction terms. As a simplified approximative equation it is natural to consider the equation 
\begin{equation}
\label{hoerdiffy}
	\left\lbrace \begin{array}{ll}
		\frac{\partial q}{\partial t}=\sum_{i,j=1}^na_{ij}(y)\frac{\partial^2}{\partial x_i\partial x_j}q\\
		\\
		q(0,x;y)=\delta_y(x),
	\end{array}\right.
\end{equation}
i.e., the equation with constant second order coefficients $a_{ij}$ evaluated at the parameter $y\in {\mathbb R}^n$.  For fixed $y\in {\mathbb R}^n\setminus A_0$ the equation (\ref{hoerdiffy}) has the explicit solution
\begin{equation}\label{ngendef}
N_0(t,x;s,y)=\frac{1}{\sqrt{4\pi \mbox{det}(a_{ij}) (\tau-s)}^n}\exp\left(-\sum_{i,j=1}^n\frac{a^{{\tiny \mbox{inv}}}_{ij}(y)(x_i-y_i)(x_j-y_j)}{4 (t-s)} \right),
\end{equation}
where the coefficients $a^{{\tiny\mbox{inv}}}_{ij}(y)$ denote the entries of the inverse of the matrix $(a_{ij}(y))$, i.e.,
\begin{equation}
\left( a^{{\tiny\mbox{inv}}}_{ij}(y)\right):=\left(a_{ij}(y)\right)^{-1}.
\end{equation}
We introduced a subscript zero in (\ref{ngendef}) in order to indicate that this functions equals $N$, but is written in given coordinates while the coordinates of the definition of $N_0$ were transformed.
Note that we did not solve equation (\ref{hoerdiffy}) for $y\in A_0$ as we defined $N=0$ for all $x,y$ with $y\in A_0~\&x=y$, as the initial condition becomes zero in this case. However, as $A_0$ is of Lebegues measure zero and the expansive representations are integral representations this 'almost-everywhere-solution' of (\ref{hoerdiffy}) leads to globally valid expansions of densities.   
Now if we plug in this solution of the natural approximative equation into the original equation we obtain
\begin{equation}
\label{hoerdiff0step1}
\begin{array}{ll}
		\frac{\partial N_0}{\partial t}-\sum_{i,j=1}^na_{ij}(x)\frac{\partial^2}{\partial x_i\partial x_j}N_0\\
		\\
		=\sum_0-\sum_{i,j=1}^na_{ij}(x)\frac{\partial^2}{\partial x_i\partial x_j}N_0\\
		\\
		=\sum_{i,j=1}^n\left( a_{ij}(y)-a_{ij}(x)\right) \frac{\partial^2}{\partial x_i\partial x_j}N_0\\
		\\
		=\frac{\partial N_0}{\partial t}-a_{ij}(x)\frac{\partial^2}{\partial x_i\partial x_j}N_0=:L_0N_0.
		\end{array}
\end{equation}
 The right side represents the deficiency of the approximative solution. We may treat it as a source term for a better approximative equation, where we notice that it is not easier to solve (\ref{hoerdiff0}) than to solve the original equation. We have to approximate again. Well $L_0N_0$ measures the deficiency of the first a approximative solution as well, and this observation leads us to the natural next better approximative equation of (\ref{hoerdiff}) of the form 
\begin{equation}
\label{hoerdiff0step2}
\begin{array}{ll}
		\frac{\partial N^1}{\partial t}-\sum_{i,j=1}^na_{ij}(y)\frac{\partial^2}{\partial x_i\partial x_j}N^1\\
		\\
		=L_0N_0=\sum_{i,j=1}^n\left( a_{ij}(y)-a_{ij}(x)\right) \frac{\partial^2}{\partial x_i\partial x_j}N_0,
		\end{array}
\end{equation}
to be solved with Dirac data $\delta(x-y)$ for given parameter $y$. This is for $(x,y)\not\in \delta$. In general we define
\begin{equation}
\begin{array}{ll}
L_0N_0:=\left\lbrace \begin{array}{ll}\sum_{i,j=1}^n\left( a_{ij}(y)-a_{ij}(x)\right) \frac{\partial^2}{\partial x_i\partial x_j}N_0~\mbox{if}~y\not\in A_0\mbox{ or }(x,y)\not\in \delta\\
\\
0~\mbox{if }y\in A_0\mbox{ and }(x,y)\in \delta,
\end{array}\right.
\end{array}
\end{equation}
For all parameters $s\in [0;T],y\in {\mathbb R}^n$ and $s<t\in (0,T],x\in {\mathbb R}^n$ the solution has the representation
\begin{equation}
N^1(t,x;s,y)=N_0(t,x;s,y) +\int_s^t\int_{{\mathbb R}^n}N_0(t,x;r,z)L_0N_0(r,z;s,y)dzdr
\end{equation}
If we plug in this approximation into the original equation (\ref{hoerdiff}), then we get
\begin{equation}
\label{hoerdiffappr1}
\begin{array}{ll}
		\frac{\partial N^1}{\partial t}-\sum_{i,j=1}^na_{ij}(x)\frac{\partial^2}{\partial x_i\partial x_j}N^1\\
		\\
		=L_0N_0+\int_s^t\int_{{\mathbb R}^n}L_0N_0(.,.;r,z)L_0N_0(r,z;s,y)dzdr.
		\end{array}
\end{equation}
This leads to the next natural approximative equation
\begin{equation}
\label{hoerdiffappr12}
\begin{array}{ll}
		\frac{\partial N^2}{\partial t}-\sum_{i,j=1}^na_{ij}(y)\frac{\partial^2}{\partial x_i\partial x_j}N^2\\
		\\
		=L_0N+\int_s^t\int_{{\mathbb R}^n}L_0N(.,.;r,z)L_0N(r,z;s,y)dzdr.
		\end{array}
\end{equation}
This leads to a natural series $(N^k)_{k\in {\mathbb N}}$ of approximations, and we may expect that
\begin{equation}
p_0:=\lim_{k\uparrow \infty}N^k 
\end{equation}
is a solution of (\ref{hoerdiffy}) if
\begin{equation}
\delta N^k=N^{k}-N^{k-1}\downarrow 0
\end{equation}
in some appropriate strong norm.
This leads directly to the classical Levy expansion of the fundamental solution on an arbitrary domain $\Omega\subseteq {\mathbb R}^n$. We may write it down including first order terms and note
\begin{lem}
We assume that the reduced H\"{o}rmander condition is satisfied.
Assume first that $y\not\in A_0$ and $y\not\in \delta$.
 For the fundamental solution $p$ of
\begin{equation}
\label{hoerdiffstep3}
	\left\lbrace \begin{array}{ll}
		0=\frac{\partial q}{\partial t}-\sum_{i,j=1}^na_{ij}\frac{\partial^2}{\partial x_i\partial x_j}q+\sum_{j=1}^nb_j\frac{\partial q}{\partial x_j}\equiv Lq\\
		\\
		q(0,x;y)=\delta_y(x),
	\end{array}\right.
\end{equation}
we have the solution
\begin{equation}\label{pexp1}
p(t,x;s,y):=N_0(t,x;s,y)+\int_s^{\tau}\int_{\Omega}N_0(t,x;\sigma,\xi)\phi(\sigma,\xi;s,y)d\sigma d\xi,
\end{equation}
where $\phi$ is a recursively defined function, i.e.,
\begin{equation}\label{pexp2}
\phi(t,x;s,y)=\sum_{m=1}^{\infty}(LN_0)_m(t,x;s,y),
\end{equation}
along with the recursion
\begin{equation}\label{pexp3}
\begin{array}{ll}
(LN_0)_1(t,x;s,y)=LN_0(t,x;s,y)\\
\\
=\frac{\partial N}{\partial t}-\sum_{i,j=1}^n a_{ij}\frac{\partial^2}{\partial x_i\partial x_j}N_0 +\sum_{j=1}^n b_j\frac{\partial N_0}{\partial x_j}\\
\\
=\sum_{i,j=1}^n \left( a_{ij}(y)-a_{ij}(x)\right) \frac{\partial^2}{\partial x_i\partial x_j}N_0 +\sum_{j=1}^n b_j\frac{\partial N_0}{\partial x_j},\\
\\
(LN_0)_{m+1}(t ,x):=\int_s^t\int_{\Omega}\left( LN_0(t,x;\sigma,\xi)\right)_m LN_0(\sigma,\xi;s,y)d\sigma d\xi.
\end{array}
\end{equation} 
For $y\in A_0$ and $(x,y)\not\in\delta$ we set $p(t,x;s,y)=0$ and the same for $y\in A_0$ where $(x,y)\in\delta$. 
In the special case $b_i\equiv 0,~1\leq i\leq n$ this is the formal Levy expansion for (\ref{hoerdiff}) as well.
\end{lem}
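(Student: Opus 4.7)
The plan is to verify that the formal Levy-type expansion (\ref{pexp1})--(\ref{pexp3}) actually defines a classical fundamental solution for $L$ on $(0,T]\times\mathbb R^n$, under the reduced Hörmander hypothesis. Fix a source point $y\notin A_0$ (by Lemma \ref{lem2}, $A_0$ has Lebesgue measure zero, and the convention $p\equiv 0$ for $y\in A_0$ is compatible with $N_0\equiv 0$ there). The argument follows the classical parametrix/Levy construction, modified to accommodate the fact that frozen coefficients at the intermediate convolution variable may be near-degenerate.

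The first step is to record pointwise Gaussian bounds for $N_0$ and its $x$-derivatives away from $A_0$. Because the eigenvalues $\lambda_i(y)$ are strictly positive for $y\notin A_0$, $N_0(t,x;s,y)$ is a standard frozen-coefficient Gaussian kernel, and for each multi-index $\gamma$ there exist constants $C_\gamma(y), c(y)>0$ such that
\begin{equation}
\bigl|\partial_x^{\gamma} N_0(t,x;s,y)\bigr|\ \leq\ \frac{C_\gamma(y)}{(t-s)^{(n+|\gamma|)/2}}\,\exp\!\left(-c(y)\,\frac{|x-y|^2}{t-s}\right).
\end{equation}
The second step is to show that $LN_0$ is weakly singular. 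The Lipschitz regularity of the $a_{ij}$ (from the weak Hörmander hypothesis) gives $|a_{ij}(y)-a_{ij}(x)|\leq K|x-y|$, and combining with the pointwise trade-off $|x-y|\,e^{-c|x-y|^2/(t-s)}\leq C(t-s)^{1/2}\,e^{-c'|x-y|^2/(t-s)}$ converts the $(t-s)^{-1}$ singularity from the second derivatives of $N_0$ into a $(t-s)^{-1/2}$ gain; bounded first-order coefficients $b_j$ behave similarly. The resulting bound
\begin{equation}
\bigl|LN_0(t,x;s,y)\bigr|\ \leq\ \tilde C(y)\,(t-s)^{-n/2-1/2}\,\exp\!\left(-\tilde c(y)\,\frac{|x-y|^2}{t-s}\right)
\end{equation}
is a Gaussian kernel with sub-critical time-singularity of order $1/2$.

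The third step is the standard Chapman--Kolmogorov iteration. Convolving two such kernels in space-time yields another Gaussian kernel whose prefactor carries a Beta function, and an induction on $m$ shows that $(LN_0)_m$ admits a Gaussian bound with time-singularity $(t-s)^{-n/2-1+m/2}$ and prefactor consolidating into $\Gamma(m/2)^{-1}$. This delivers absolute convergence of $\phi=\sum_m (LN_0)_m$ on compact subsets of $(0,T]\times(\mathbb R^n\setminus A_0)$, together with Gaussian estimates of the type (\ref{pxest1}). Term-by-term differentiability of $p$ follows from the same bounds, and $Lp=0$ is the classical parametrix cancellation: applying $L$ to (\ref{pexp1}) and differentiating under the integral produces $LN_0+\int N_0\,\phi$ plus a boundary contribution, while the Volterra integral equation $\phi = LN_0 + \int LN_0\cdot\phi$ encoded by (\ref{pexp2})--(\ref{pexp3}) absorbs the bulk term exactly. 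The Dirac initial data is inherited from $N_0$ since the correction integral vanishes as $t\downarrow s$.

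The main technical obstacle is the non-uniformity of the constants $C_\gamma(y), c(y)$ as $y$ approaches $A_0$: the estimates above are only uniform on compact subsets of $\mathbb R^n\setminus A_0$. In the convolution (\ref{pexp3}) the intermediate variable $\xi$ ranges over all of $\mathbb R^n$, so one must check that the integrand $N_0(t,x;\sigma,\xi)\cdot LN_0(\sigma,\xi;s,y)$ remains integrable as $\xi\to A_0$. This is handled by the convention $N_0\equiv 0$ on $A_0$ combined with the observation that, for $x\neq\xi$, the exponential factor dominates the algebraic blow-up $\prod_i\lambda_i(\xi)^{-1/2}$; the integrand therefore vanishes on the Lebesgue null set $A_0$ (Lemma \ref{lem2}) and remains integrable in its vicinity. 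With this regularization, the iteration proceeds essentially as in the non-degenerate classical case, and Lemmas \ref{lem1} and \ref{lem2} guarantee that non-degenerate source points $y$ are dense, which is what the local construction requires.
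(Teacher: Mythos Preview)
Your proof is correct and follows essentially the same approach as the paper: the classical parametrix/Levy construction via weakly singular integral majorants, extended to the setting where the elliptic degeneracies $A_0$ form a Lebesgue null set. The paper's own proof is only a two-sentence sketch referencing ``standard classical methods'' and deferring details to the subsequent theorem, so you have in fact supplied considerably more of the argument---the Gaussian bounds on $N_0$, the Lipschitz gain making $LN_0$ weakly singular, the Beta-function iteration, and the handling of the non-uniformity near $A_0$---than the paper does.
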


\begin{proof}
Standard classical methods of proof of Gaussian estimates by weakly singular integral majorants can be extended to the case where the set of elliptic degeneracies is of measure zero. Another method of proof can be extracted by our stronger Gaussian estimates in the proof of the theorem below which leads to full regularity in the case of a reduced H\"{o}rmander condition.
\end{proof}

In the reduced case the estimates in \cite{KS} are now easy to obtain. We have
\begin{thm}
The estimates (\ref{pxest}) in theorem \ref{kampen} hold if the reduced H\"{o}rmander condition is satisfied. Moreover in this case they hold in a stronger form without the factor $(1+x)^{m_{j,\alpha,\beta}}$.
\end{thm}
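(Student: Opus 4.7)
The plan is to bootstrap from the Levy-type representation established in the preceding lemma,
\begin{equation}\nonumber
p(t,x;s,y)=N_0(t,x;s,y)+\int_s^{t}\int_{\mathbb R^n}N_0(t,x;\sigma,\xi)\,\phi(\sigma,\xi;s,y)\,d\xi\,d\sigma,\qquad \phi=\sum_{m\ge 1}(L_0 N_0)_m,
\end{equation}
and to transfer Gaussian majorants from the parametrix $N_0$ to $p$ and its derivatives. Since the reduced H\"ormander condition is in force, by Lemma~\ref{lem2} the degeneracy locus $A_0$ has Lebesgue measure zero, so the $d\xi$-integral ignores it and we may work with the (pointwise defined) smooth leading term on $\mathbb R^n\setminus A_0$.

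First I would derive the core pointwise estimate for $N_0$ and its derivatives: for any multiindices $\alpha,\beta$ and any $j\ge0$ there exist constants $C,c>0$ and an integer $n_{j,\alpha,\beta}$ such that, for $y\notin A_0$,
\begin{equation}\nonumber
\Bigl|\partial_t^j\partial_x^\alpha\partial_y^\beta N_0(t,x;s,y)\Bigr|\le \frac{C\,\Lambda(y)}{(t-s)^{n_{j,\alpha,\beta}}}\exp\Bigl(-c\,\frac{|x-y|^2}{t-s}\Bigr),
\end{equation}
where $\Lambda(y)$ collects finitely many inverse powers of $\lambda_i(y)$ and their $\partial_y$-derivatives. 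These factors are the technical heart of the matter; the point is that they are integrable in $y$ over $\mathbb R^n$ against a Gaussian because $A_0$ is a null set and because the $\lambda_i$, being (locally) smooth eigenvalues of the Lipschitz matrix $(a_{ij})$, vanish to at most a controllable order along $A_0$, so that $\int \Lambda(\xi) e^{-c|x-\xi|^2/(t-\sigma)}\,d\xi$ is finite and of order $(t-\sigma)^{n/2}$ up to a constant. This measure-zero bookkeeping is the step I expect to be the main obstacle, and it is handled by the convention $N_0\equiv 0$ on $A_0\cap\delta$ together with the weak-singular integral estimates already used classically.

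Next I would run the standard parametrix iteration on $(L_0N_0)_m$. Because of the Lipschitz continuity of the $a_{ij}$,
\begin{equation}\nonumber
|L_0 N_0(t,x;s,y)|\le \sum_{i,j}|a_{ij}(y)-a_{ij}(x)|\,|\partial_{x_i}\partial_{x_j}N_0|\le C\,|x-y|\,(t-s)^{-n/2-1}e^{-c|x-y|^2/(t-s)},
\end{equation}
so the factor $|x-y|$ absorbs one half power of the singular $(t-s)$, yielding a weakly singular convolution kernel. Iterating the Chapman--Kolmogorov-type convolution in \eqref{pexp3} gives a geometric bound of Mittag--Leffler type, so that $\phi$ converges and is itself majorised by a Gaussian. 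Standard telescoping arguments then give derivative estimates for $\phi$ of the same Gaussian shape with adjusted exponents $n_{j,\alpha,\beta}$.

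Finally, I would differentiate the representation of $p$ above under the integral sign: the $\partial_x$- and $\partial_t$-derivatives fall onto $N_0$ and are controlled by the first step; the $\partial_y$-derivatives either fall onto $N_0$ (which is $C^\infty$ off $A_0$) or on $\phi$ (already bounded). Combining the Gaussian majorants for $N_0$ and $\phi$ yields an estimate of the form \eqref{pxest} with the given multiindex bookkeeping. The absence of the growth factor $(1+x)^{m_{j,\alpha,\beta}}$ is now automatic in the reduced case: no first-order drift $b_j$ is present, so no flow along characteristics is integrated and no polynomial-in-$x$ factor is generated in any of the Gaussian majorants; all constants depend only on $\|a_{ij}\|_{\mathrm{Lip}}$ and the reduced Lie-bracket level, uniformly in the spatial argument $x$.
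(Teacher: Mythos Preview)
Your overall strategy---Levy expansion, Gaussian majorant for $N_0$, then propagate through the iteration---matches the paper's. The real divergence, and the gap in your plan, is in how you handle the degeneracy of the eigenvalues. You propose a bound of the shape
\[
\bigl|\partial_t^j\partial_x^\alpha\partial_y^\beta N_0\bigr|\le \frac{C\,\Lambda(y)}{(t-s)^{n_{j,\alpha,\beta}}}\exp\Bigl(-c\,\frac{|x-y|^2}{t-s}\Bigr),
\]
with a \emph{fixed} constant $c$ in the exponential and all the $\lambda_i^{-1}(y)$ dependence pushed into the prefactor $\Lambda(y)$. You then argue that $\Lambda(\xi)$ is integrable in $\xi$ because ``the $\lambda_i$ vanish to at most a controllable order along $A_0$''. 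That claim is unproven and in general false: smooth nonnegative functions can vanish to infinite order on a null set, so $\lambda_i^{-1}$ need not be locally integrable. Once you replace the genuine exponent $\sum_i\lambda_i^{-1}(y)(x_i-y_i)^2/(t-s)$ by $c|x-y|^2/(t-s)$ with $c$ fixed, you have thrown away the only mechanism that compensates the blow-up of $\Lambda(y)$ near $A_0$.

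The paper avoids this by never separating the two: its upper bounds keep the full anisotropic Gaussian $\prod_i\lambda_i(y)^{-1/2}\exp\bigl(-\tfrac12\sum_i\lambda_i^{-1}(y)(x_i-y_i)^2/t\bigr)$ intact, so that the polynomial-in-$z$ factors produced by differentiation are absorbed via $\sup_z\bigl(p_\alpha(z)e^{-\frac12\sum_i\lambda_i^{-1}(y)z_i^2}\bigr)$, which stays bounded as $y\to A_0$ because the exponential narrows at exactly the rate the polynomial coefficients blow up. For the integral term, rather than your Mittag--Leffler iteration with a half-power gain from $|a_{ij}(x)-a_{ij}(y)|$, the paper splits the time integral at $t/2$, works on $\Omega\setminus B_\epsilon(x)$, and performs repeated \emph{partial integration in the time variable} $\sigma$ to trade powers of $(t-\sigma)^{-1}$ for additional factors $(x-\xi)^2$, which are then reabsorbed into the Gaussian. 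Your classical weakly-singular iteration would likely suffice for existence and low-order derivatives, but for the full family of estimates you need either the paper's time-integration-by-parts device or a correct version of your $\Lambda$-integrability claim---and the latter you have not supplied.
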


\begin{proof}
We consider transformed coordinates and the Gaussian leading term in the form (\ref{Ndef}) above. First we consider multivariate spatial derivatives with respect to the variable $x$ of this leading term.
\begin{equation}\label{Ndef**}
\begin{array}{ll}
N(t,x;0,y):=\\
\\
\left\lbrace \begin{array}{ll}\frac{1}{\sqrt{4\pi \Pi_{i=1}^n\lambda_i(y) t}^n}\exp\left(-\sum_{i=1}^n\frac{\lambda^{-1}_{i}(y)(x_i-y_i)^2}{4 t} \right)
            ~\mbox{ if }y\not\in A_0\mbox{ or }(x,y)\not\in \delta\\
	    \\
	    0~\mbox{if }y\in A_0\mbox{ and }(x,y)\in \delta,
           \end{array}\right.
	   \end{array}
\end{equation}
If $y\not\in A_0\mbox{ and }(x,y)\not\in \delta$, then for $t>0$ we have
\begin{equation}
D^{\alpha}_xN_0(t,x;0,y)=p_{\alpha}\left(\frac{(x-y)}{t}\right)N_0(t,x;0,y)
\end{equation}
for some polynomial $p_{\alpha}$. Note that $p_{\alpha}$ may have an absolute terms $\frac{1}{t^p}$, which we may consider as parameters as $t>0$.  As  $y\not\in A_0$ we have for $z:=\frac{(x-y)}{2t}$
\begin{equation}
\begin{array}{ll}
{\big |}p_{\alpha}\left(\frac{(x-y)}{t}\right)N_0(t,x;0,y){\big |}
\leq {\Big |}\sup_{z\in {\mathbb R}}\left( p_{\alpha}(z)
\exp\left(-\frac{1}{2}\sum_{i=1}^n\lambda^{-1}_{i}(y)z_i^2 \right)\right)\times\\
\\ 
\frac{1}{\sqrt{4\pi \Pi_{i=1}^n\lambda_i(y) t}^n}\exp\left(-\frac{1}{2}\sum_{i=1}^n\lambda^{-1}_{i}(y)z_i^2 \right){\Big |}\\
\\
\leq \frac{C}{t^p}\frac{1}{\sqrt{4\pi \Pi_{i=1}^n\lambda_i(y) }}\exp\left(-\frac{1}{2}\sum_{i=1}^n\lambda^{-1}_{i}(y)\frac{(x_i-y_i)^2}{t} \right)
\end{array}
\end{equation}
for some $C>0$ and some nonnegative integer $p$. A similar estimate holds for multivariate derivatives with respect to the variables $y$, such that altogether in the case $y\not\in A_0\mbox{ and }(x,y)\not\in \delta$ we get
\begin{equation}
{\Big |}D^{\alpha}_xD^{\beta}_yN_0(t,x;0,y){\Big |}\leq \frac{C}{t^p}\frac{1}{\sqrt{4\pi \Pi_{i=1}^n\lambda_i(y) }^n}
\exp\left(-\frac{1}{2}\sum_{i=1}^n\lambda^{-1}_{i}(y)\frac{(x_i-y_i)^2}{t} \right)
\end{equation}
for some $C>0$ and some nonnegative integer $p$. 
Next note that for a Cauchy sequence $y_n\in {\mathbb R}^n\setminus A_0$ the supremum
\begin{equation}
 {\Bigg |}\sup_{z\in {\mathbb R}}\left( p_{\alpha}(z)
\exp\left(-\frac{1}{2}\sum_{i=1}^n\lambda^{-1}_{i}(y_n)z_i^2 \right)\right){\Bigg |}
\end{equation}
is uniformly bounded such that the estimates extend to the degeneracies taking into account the definition of $N_0$ at the elliptic degeneracies. As $t>0$ the estimates also extend to the points on the diagonal $\delta$.
Next as we apply time derivatives $D^m_t:=\frac{d^m}{dt^m}$ to multivariate spatial derivatives of the leading term $N_0$ we effectively have
\begin{equation}
D^m_tD^{\alpha}_xD^{\beta}_yN_0(t,x;0,y)=D^m_t\left( p_{\alpha\beta}\left(\frac{(x-y)}{t}\right)N_0(t,x;0,y)\right) 
\end{equation}
for some polynomial $p_{\alpha\beta}$. Applying Leibniz rule and the estimates for the spatial derivatives of $N_0$ it is then straightforword to get the estimate in (\ref{pexp1}) below.
Hence for the leading term of our expansion $N_0$ we have
\begin{equation}\label{pxestN}
\begin{array}{ll}
{\Bigg |}\frac{\partial^j}{\partial t^j} \frac{\partial^{|\alpha|}}{\partial x^{\alpha}} \frac{\partial^{|\beta|}}{\partial y^{\beta}}N_0(t,x;0,y){\Bigg |}\\
\\
\leq \frac{A_{j,\alpha,\beta}(t)}{t^{n_{j,\alpha,\beta}}}\exp\left(-B_{j,\alpha,\beta}(t)\frac{(x-y)^2}{t}\right).
\end{array}
\end{equation}
Next we extend these estimates to the whole expansion of the density.
For the fundamental solution $p$ of (\ref{hoerdiffstep3})
we show that on $(x,y)\not\in \delta $ and $y\not\in A_0$ the integral expression in
\begin{equation}\label{pexp1special}
\begin{array}{ll}
\frac{\partial^{|\alpha|}}{\partial x^{\alpha}} \frac{\partial^{|\beta|}}{\partial y^{\beta}}p(t,x;0,y):= \frac{\partial^{|\alpha|}}{\partial x^{\alpha}} \frac{\partial^{|\beta|}}{\partial y^{\beta}}N_0(t,x;0,y)\\
\\
+\int_0^{t}\int_{\Omega} \frac{\partial^{|\alpha|}}{\partial x^{\alpha}} \frac{\partial^{|\beta|}}{\partial y^{\beta}}N_0(t,x;\sigma,\xi)\phi(\sigma,\xi;0,y)d\sigma d\xi
\end{array}
\end{equation}
can be well-defined where $\phi$ is a recursively defined via (\ref{pexp2}). 
The essential step is to look at the set of points $y\in \Omega^m_0\subseteq \Omega$, where
\begin{equation}
D^{\gamma}_y\lambda_i(y)\neq 0 \mbox{ for all multiindices }|\gamma|\leq m,
\end{equation}
and where $m\geq |\alpha|+|\beta|$.

This is not trivial because a brute force differentiation expansion leads to singular summands  which are not integrable. It has to be shown that the integrals in the second summand of this representation make sense.
We use the special structure of $\phi$, and construct estimates following the successive approximations of $\phi$ in the construction above. First recall that for $y\not\in A_0$ that for the first approximation of $\phi$ we have
\begin{equation}
\begin{array}{ll}
L_0N_0:=\sum_{i,j=1}^n\left( a_{ij}(y)-a_{ij}(x)\right) \frac{\partial^2}{\partial x_i\partial x_j}N_0,
\end{array}
\end{equation}
and which extends naturally at points of degeneracy. Note that we have regular coefficient functions $a_{ij}\in C^{\infty}_b$ with
\begin{equation}
a_{ij}(x)-a_{ij}(y)=O(|x-y|).
\end{equation}
Now if we look at the first approximation we have to estimate the term
\begin{equation}\label{pexp1special2}
\begin{array}{ll}
\int_0^{t}\int_{\Omega} \frac{\partial^{|\alpha|}}{\partial x^{\alpha}} \frac{\partial^{|\beta|}}{\partial y^{\beta}}N_0(t,x;\sigma,\xi)\sum_{i,j=1}^n\left( a_{ij}(\xi)-a_{ij}(y)\right) \frac{\partial^2}{\partial \xi_i\partial \xi_j}N_0(\sigma,\xi;0,y)d\sigma d\xi
\end{array}
\end{equation}
As $y\in \Omega^m_0$, hence $D^{\alpha}_y\lambda_i(y)^{-1}\neq 0$ for all multiindices $|\alpha|\leq m$ we may 
do formal derivatives with respect to spatial $x$ and spatial $y$ variables such that we get a series of terms of the form
\begin{equation}\label{pexp1special2}
\begin{array}{ll}
\int_0^{t}\int_{\Omega} \frac{(x-\xi)^{\gamma}}{(t-\sigma)^p}N_0(t,x;\sigma,\xi)
\sum_{i,j=1}^n\left( a_{ij}(\xi)-a_{ij}(y)\right)
\frac{(\xi-y)^{\delta}}{\sigma^q} N_0(\sigma,\xi;0,y)\times\\
\\
\times O(\lambda_1(y),\cdots,\lambda_n(y))d\sigma d\xi,
\end{array}
\end{equation}
where $O$ in $O(\lambda_1(y),\cdots,\lambda_n(y))$ is a certain operator.
 The details of the operator are not of interest as long as $y\in \Omega^m_0$;
  we just note that $y\rightarrow O(\lambda_1(y),\cdots,\lambda_n(y))$ is a well-defined regular function with finite values. We note that we have no specific restrictions for the constellation of nonnegative integers $p,q,\gamma_i,\delta_j$. Hence for some smooth functions $f_{p,q,\gamma,\delta}$ we have essentially to estimate terms of the form 
\begin{equation}\label{pexp1special2}
\begin{array}{ll}
\int_0^{t}\int_{\Omega} \frac{(x-\xi)^{\gamma}}{(t-\sigma)^p}N_0(t,x;\sigma,\xi)
\frac{(\xi-y)^{\delta}}{\sigma^q} N_0(\sigma,\xi;0,y)f_{p,q,\gamma,\delta}(y)d\sigma d\xi\\
\\
=\int_0^{t/2}\int_{\Omega} \frac{(x-\xi)^{\gamma}}{(t-\sigma)^p}N_0(t,x;\sigma,\xi)
\frac{(\xi-y)^{\delta}}{\sigma^q} N_0(\sigma,\xi;0,y)f_{p,q,\gamma,\delta}(y)d\sigma d\xi\\
\\
+\int_{t/2}^t\int_{\Omega} \frac{(x-\xi)^{\gamma}}{(t-\sigma)^p}N_0(t,x;\sigma,\xi)
\frac{(\xi-y)^{\delta}}{\sigma^q} N_0(\sigma,\xi;0,y)f_{p,q,\gamma,\delta}(y)d\sigma d\xi
\end{array}
\end{equation}
Note that we assumed $x\neq y$. As an essential example we consider the second summand on the right side of the latter equation. We consider the integral first on the domain $\Omega_{\epsilon}=\Omega\setminus B_{\epsilon}(x)$, i.e., on the complement of the ball $B_{\epsilon}(x)$ of small radius $\epsilon>0$ around $x$. The only singularities are at $\sigma=t$. As we are considering points $y\in \Omega^m_0$ we have not to deal specifically with the $\lambda_i(y)$, the derivatives of their inverses and so on. We indicate that in the change of the subscript of the factor $f$. We may absorb also constant obtained by partial integration in these terms and speak of a 'proportional from'. Then for $p\geq 2$ one partial integration with respect to  time $\sigma$ leads to terms of the proportional form 
\begin{equation}
\begin{array}{ll}
\int_{t/2}^t\int_{\Omega_{\epsilon}} \frac{(x-\xi)^{\gamma+2}}{(t-\sigma)^{p+1}}N_0(t,x;\sigma,\xi)
\frac{(\xi-y)^{\delta}}{\sigma^q} N_0(\sigma,\xi;0,y)f_{p,q,\gamma+2,\delta}(y)d\sigma d\xi+\\
\\
\int_{t/2}^t\int_{\Omega_{\epsilon}} \frac{(x-\xi)^{\gamma+2}}{(t-\sigma)^{p-1}}N_0(t,x;\sigma,\xi)
\frac{\partial}{\partial \sigma}\left( \frac{(\xi-y)^{\delta}}{\sigma^q} N_0(\sigma,\xi;0,y)\right) f_{p,q,\gamma+2,\delta}(y)d\sigma d\xi
\end{array}
\end{equation}
where $\gamma+2$ is the multiindex with entries $\gamma_i+2$. For the second summand the derivative wit respect to $\sigma$ is in a domain well away from zero such that this part of the intergand is regular. Iterating this proxies we obtain after finitely many  or after $p$ steps terms of the form
\begin{equation}
\begin{array}{ll}
\int_{t/2}^t\int_{\Omega_{\epsilon}} \frac{(x-\xi)^{\gamma'}}{(t-\sigma)^{q}}N_0(t,x;\sigma,\xi)
\frac{(\xi-y)^{\delta}}{\sigma^q} N_0(\sigma,\xi;0,y)f_{p,q,\gamma+2,\delta}(y)d\sigma d\xi,
\end{array}
\end{equation} 
where $\gamma_i\geq 2q$. For these terms we have upper bounds of the form
\begin{equation}
\begin{array}{ll}
{\big |}Q(x-y)p_{\gamma'}\left(\frac{(x-y)}{t}\right)N_0(t,x;0,y){\big |}
\\
\\
\leq {\Big |}\sup_{z\in {\mathbb R}}\left( p_{\alpha}(z)
\exp\left(-\frac{1}{4}\sum_{i=1}^n\lambda^{-1}_{i}(y)z_i^2 \right)\right)\times\\
\\ 
\tilde{C}\frac{1}{\sqrt{4\pi \Pi_{i=1}^n\lambda_i(y) t}^n}\exp\left(-\frac{1}{2}\sum_{i=1}^n\lambda^{-1}_{i}(y)z_i^2 \right){\Big |}\\
\\
\leq C\frac{1}{\sqrt{4\pi \Pi_{i=1}^n\lambda_i(y) t}^n}\exp\left(-\frac{1}{2}\sum_{i=1}^n\lambda^{-1}_{i}(y)\frac{(x_i-y_i)^2}{t} \right)
\end{array}
\end{equation}
where $Q$ and $p_{\gamma'}$ are some polynomials. The estimates extend to the limits $\epsilon\downarrow 0$ and $y\in A_0$ off-diagonal. 

Next we consider additional time derivatives. Since the estimates are proved for the leading term it is sufficient to consider the integral terms. Start with the first order time derivative. We have 
\begin{equation}
\begin{array}{ll}
D_t\int_0^{t}\int_{\Omega} \frac{\partial^{|\alpha|}}{\partial x^{\alpha}} \frac{\partial^{|\beta|}}{\partial y^{\beta}}N_0(t,x;\sigma,\xi)\phi(\sigma,\xi;0,y)d\sigma d\xi,\\
\\
=
\int_0^{t}\int_{\Omega} D_t\frac{\partial^{|\alpha|}}{\partial x^{\alpha}} \frac{\partial^{|\beta|}}{\partial y^{\beta}}N_0(t,x;\sigma,\xi)\phi(\sigma,\xi;0,y)d\sigma d\xi\\
\\
+\int_{\Omega} \frac{\partial^{|\alpha|}}{\partial x^{\alpha}} \frac{\partial^{|\beta|}}{\partial y^{\beta}}N_0(t,x;\sigma,\xi)\phi(\sigma,\xi;0,y) d\xi{\Big |_{\sigma=t}},
\end{array}
\end{equation}
Applying higher order derivatives with respect to time and considering the expansion of $\phi$ we get a series where we observe that it is essential to estimate terms of the form
\begin{equation}
\int_0^{t}\int_{\Omega} D^m_t\frac{\partial^{|\alpha|}}{\partial x^{\alpha}} \frac{\partial^{|\gamma|}}{\partial \xi^{\gamma}}N_0(t,x;\sigma,\xi)\frac{\partial^{|\delta|}}{\partial \xi^{\delta}} \frac{\partial^{|\beta|}}{\partial y^{\beta}}N_0(\sigma,\xi;0,y)d\sigma d\xi\\
\end{equation}
for arbitrary multiindices $\alpha,\beta,\gamma,\delta$ and arbitrary nonnegative integers $m$. This leads to the same situation as in the case of pure multivariate spatial derivatives.

\end{proof}

\subsection{Existence of local densities in the case of the classical H\"{o}rmander condition}
If a drift is added to the diffusion equation above and the classical H\"{o}mander condition is satisfied at each point $x\in {\mathbb R}^n$, then we still have weak ellipticity in the sense that $(a_{ij})=\sigma\sigma^T\geq 0$, and can diagonalize the $\sigma\sigma^T$ such that we get still get a function $x\rightarrow \Lambda(x)=D(x)\sigma\sigma^T(x)D^T(x)$
where $\Lambda(x)=\mbox{diag}(\lambda_1(y),\cdots ,\lambda_n(x))$ is a diagonal matrix. We also still have $x\rightarrow \Lambda(x)\in C^{\infty}$. However, this time the rank of the matrix may be strictly less than $n$, even locally on any ball $B_{\epsilon}(x)$ (cf. also our example in section 2 of this paper). 
In this case infinitesimal rotations and infinitesimal translations induced by the drift, i.e, the first order coefficients of the operator, and the diffusion effect in subspaces work together in order to induce a diffusion effect on a whole ball. 
However, there is a difference in the estimates as information may be transported via a vector field while the diffusion effect is only via an interplay of rotations induced by the drift and diffusions which are effective in other space directions. This causes the additional polynomial factor $(1+|x|^{m_{j,\alpha,\beta}})$ in the Kusuoka Stroock estimates on a global scale.
In order to obtain a local expansion in this case and following our discussion in section 2 and the preceding part of this section
it is natural to consider for each $x\in {\mathbb R}^n$ and each ball $B_{\epsilon}(x)$ of radius $\epsilon$ around $x$ an $y\in B_{\epsilon}(x)$ such that for some $k\geq 1$ the equation
\begin{equation}\label{leadingtermeq}
\frac{\partial u}{\partial t}-\sum_{j=1}^k\lambda_{i_j}(y)\frac{\partial^2}{\partial x_{i_j}^2}+\sum_{j\in \{1,\cdots,n\}\setminus \{i_1,\cdots,i_k\}}b_j(x)\frac{\partial u}{\partial x_j}=0
\end{equation}
has $\lambda_{i_j}(y)>0$ for $1\leq j\leq k\geq 1$. We abbreviate ${\mathbb N}_n=\left\lbrace 1,\cdots,n\right\rbrace $. As the H\"{o}rmander condition is satisfied, we can choose $y$ such that in addition for each $j\in \{1,\cdots,n\}\setminus \{i_1,\cdots,i_k\}$  the coefficient $b_j$ depends effectively on the variable $x_{i_j}$ at $y$ for some $1\leq j\leq k$. This means that the flow ${\cal F}^{b_{i,1,\cdots,i_n}}_t$ associated to the equation
\begin{equation}
\frac{\partial u}{\partial t}+\sum_{j\in \{1,\cdots,n\}\setminus \{i_1,\cdots,i_k\}}b_j(x)\frac{\partial u}{\partial x_j}=0
\end{equation}
transports such that for small time $t>0$ the components of ${\cal F}^{b_{i,1,\cdots,i_n}}_ty$ 
in the complement of $\mbox{span}\left\lbrace e_{i_1},\cdots ,e_{i_k}\right\rbrace$ are dependent on some variable $x_{i_j}$ for $1\leq j\leq k$. This way the diffusion has a smoothing effect in all direction which are complementary to  $\mbox{span}\left\lbrace e_{i_1},\cdots ,e_{i_k}\right\rbrace$. Solving the equation (\ref{leadingtermeq}) for such an $y$ by a Trotter product iteration we get a natural leading term which is smoothing in all direction. We can then repeat the analysis of the preceding section in the case of the reduced H\"{o}rmander condition and do the same construction with this leading term in order to obtain a local construction of the full density in the case of the classical H\"{o}rmander condition.
 
Next we analyze this situation in more detail. First we have  
\begin{lem}\label{lem1}
Let ${\mathbb N}_n:= \left\lbrace 1,\cdots,n\right\rbrace$.
Assume that the H\"{o}rmander condition is satisfied and let $x\in {\mathbb R}^n$. Let
\begin{equation}
S_x:=\left\lbrace i\in {\mathbb N}_n|\lambda_i(x)>0 \right\rbrace, 
\end{equation} 
As the $\lambda_i$ are contiguous we consider a ball $B_{\epsilon}(x)$ of radius $\epsilon>0$ around $x$ such that for all $y\in B_{\epsilon}(x)$ we have 
\begin{equation}
S_y:=\left\lbrace i\in \left\lbrace 1,\cdots,n\right\rbrace |\lambda_i(y)>0 \right\rbrace\supseteq S_x.
\end{equation} 
and a finite family $\left\lbrace B^{i_1,\cdots,i_k}_{\epsilon}(x)|0\leq i_1\leq\cdots \leq i_k\leq n \right\rbrace$ of open sets in $B_{\epsilon}(x)$ with
\begin{equation}
B^{i_1,\cdots,i_k}_{\epsilon}(x)=\left\lbrace  y\in B_{\epsilon}(x)|S_y=\left\lbrace i_1,\cdots,i_k\right\rbrace \right\rbrace 
\end{equation}
such that the union of the closures of these open sets equals the set $B_{\epsilon}(x)$, and for some $y\in B_{\epsilon}^{i_1,\cdots,i_k}(x)$ we have
\begin{equation}\label{genhoerimpl}
S^{i_1,\cdots,i_k}_y\cup \left\lbrace j\in {\mathbb N}_n\setminus S^{i_1,\cdots,i_k}_y|\exists i\in S_y~\&~b_j(y)\lambda_i(y)\neq 0\right\rbrace=\left\lbrace 1,\cdots ,n\right\rbrace.
\end{equation}
\end{lem}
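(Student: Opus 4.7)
My proof plan splits the lemma into three tasks of increasing difficulty: (i) shrinking the ball so that the inclusion $S_y \supseteq S_x$ holds uniformly; (ii) constructing the finite stratification $\{B^{i_1,\ldots,i_k}_\epsilon(x)\}$ whose closures cover $B_\epsilon(x)$; and (iii) showing that in at least one stratum there exists $y$ satisfying the spanning identity (\ref{genhoerimpl}). Tasks (i) and (ii) are essentially topological consequences of continuity of the $\lambda_i$; the genuine content is (iii), which is where the H\"ormander hypothesis must be activated.

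For (i), I would argue that since $\lambda_i(x) > 0$ for every $i \in S_x$ and each $\lambda_i$ is continuous (indeed smooth), there exists $\epsilon_0 > 0$ such that $\lambda_i(y) > 0$ for all $y \in B_{\epsilon_0}(x)$ and all $i \in S_x$. Any $\epsilon \le \epsilon_0$ then yields the desired inclusion. For (ii), I would note that $y \mapsto S_y$ takes values in the finite power set of $\mathbb{N}_n$, so the decomposition by level sets is automatically finite. Openness of $B^{i_1,\ldots,i_k}_\epsilon(x)$ (up to the author's convention on whether $S_y$ equals or contains $\{i_1,\ldots,i_k\}$) follows because each condition $\lambda_{i_j}(y) > 0$ defines an open set by continuity; the complementary vanishing conditions are closed, which is exactly why one must take closures of strata to recover $B_\epsilon(x)$.

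The main work is in (iii). Here I would argue by contradiction: suppose (\ref{genhoerimpl}) fails at every point of every stratum. Then, for each $y \in B_\epsilon(x)$, there is an index $j \notin S_y$ such that $b_j(y) \lambda_i(y) = 0$ for every $i \in S_y$; since $\lambda_i(y) > 0$ on $S_y$ by definition of the stratum, this forces $b_j(y) = 0$. I would then feed this into the recursive definition of $H_y$ from the introduction. The diffusion vector fields $V_{i_1},\ldots,V_{i_k}$ associated with the stratum span only the subspace $E := \mathrm{span}\{e_{i_1},\ldots,e_{i_k}\}$ at $y$, and the assumed identities $b_j \equiv 0$ for $j \notin S_y$ in the stratum imply that iterated Lie brackets $[V_{i_\ell}, V_0]$, $[V_{i_\ell},[V_{i_m},V_0]]$, and so on, continue to produce only vectors in $E$ (componentwise vanishing propagates through differentiation because all $b_j$ with $j \notin S_y$ are identically zero on a neighborhood, hence all their partial derivatives vanish). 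This contradicts $H_y = \mathbb{R}^n$, which is guaranteed by the standing H\"ormander hypothesis throughout $B_\epsilon(x)$. Hence some stratum must contain a point $y$ where (\ref{genhoerimpl}) holds, as required.

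The hard part is making the propagation-of-vanishing step in (iii) precise: one must argue, in the style of Lemma \ref{lem1} in the reduced case, that vanishing of the $b_j$ on an \emph{open} set (the stratum or its interior) forces vanishing of all derivatives of $b_j$, so that every iterated commutator involving these indices remains in $E$. This is where the choice of the stratum as an open set becomes essential, and it is also where the argument would need to be adapted slightly if the author's $B^{i_1,\ldots,i_k}_\epsilon(x)$ turns out to mean $\{S_y \supseteq \{i_1,\ldots,i_k\}\}$ rather than equality, since in that case one replaces "constant on an open set" by "constant on a relatively open subset where the stratification is maximal." Once this componentwise propagation is established, the remainder of the argument is a routine finiteness: there are only finitely many strata, and the H\"ormander condition must fail somewhere if (\ref{genhoerimpl}) fails everywhere, contradicting the hypothesis.
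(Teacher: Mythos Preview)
Your approach is the same as the paper's: continuity of the $\lambda_i$ for part (i), and for part (iii) a contradiction in which failure of (\ref{genhoerimpl}) throughout a stratum forces a drift component to vanish identically there, so that the Lie-bracket recursion can never reach the missing direction, contradicting $H_y=\mathbb{R}^n$. The paper's proof is much terser than yours and simply asserts the final implication without discussion; you are more careful about identifying that vanishing on an \emph{open} set is the crucial point.

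One quantifier slip to tighten: the negation of (\ref{genhoerimpl}) gives, for each $y$, only \emph{some} index $j(y)\notin S_y$ with $b_{j(y)}(y)=0$, not all of them, so you cannot directly assert ``$b_j\equiv 0$ for all $j\notin S_y$ on the stratum'' and conclude that brackets remain in $E$. The repair is that a single $j_0$ suffices: the zero sets $\{b_j=0\}$ are closed and the complementary index set is finite, so a Baire/pigeonhole argument produces a relatively open subset of the stratum on which one fixed $b_{j_0}$ vanishes identically; an easy induction on bracket length then shows every iterated bracket has vanishing $e_{j_0}$-component there, which already contradicts $H_y=\mathbb{R}^n$. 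The paper glosses over exactly this passage too, simply positing a single $j$ valid for all $y$ in the stratum without comment.
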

\begin{proof}
Since $\lambda_i$ is continuous for all $i\in \left\lbrace 1,\cdots ,n \right\rbrace$ there is an $\epsilon >0$ such that 
\begin{equation}
S_x\subseteq S_y
\end{equation}
for all $y\in B_{\epsilon}(x)$.
If (\ref{genhoerimpl}) this is not true for some $\left\lbrace i_1,\cdots,i_k\right\rbrace$, then there exists  $j\in {\mathbb N}_n\setminus S^{i_1,\cdots,i_k}_y$ such that for all $i\in S^{i_1,\cdots,i_k}_y$ and for all $y\in B^{i_1,\cdots,i_k}_{\epsilon}(x)$ we have
\begin{equation}
b_j(y)\lambda_i(y)=0.
\end{equation}
This implies that the Hoermander condition is not satisfied at $y$.
\end{proof}
Note that for the choice of $i,j$ such that $b_j(y)\lambda_i(y)\neq 0$ we may in addition assume that $b_j$ is effectively dependent on $y_i$, i.e., $y_i$ is not a critical point of  $z_i\rightarrow b_j(z)$.
Next the closure of $B^{i_1,\cdots,i_k}_{\epsilon}(x)$ is the closure of the finite union of the sets
\begin{equation}
B^{i_1,\cdots,i_k,j,i}_{\epsilon}(x):=\left\lbrace y|b_j(y)\lambda_i(y)>0\right\rbrace 
\end{equation}
over the indices $i\in S_y$ and $j\in {\mathbb N}_n\setminus S_y$. The set of points in
\begin{equation}
B^{i_1,\cdots,i_k,j,i}_{\epsilon}(x)\setminus \cup_{i\in S_y,~j\in {\mathbb N}_n\setminus S_y}B^{i_1,\cdots,i_k,j,i}_{\epsilon}(x)
\end{equation}
are of Lebesgues measure zero by the H\"{o}rmander condition and Sard's theorem. 

 Next we consider the rational points $Q$ and the set of $n$-tuples o rational points $Q^n$. Let $C_{Q}:=\left\lbrace B_{\epsilon_m}(x_m)|m\in {\mathbb N}\right\rbrace$ be an enumeration of all balls with all rational center points $x_m\in Q^n$ and all rational radius sizes $\epsilon_m>0$ chosen such that the statement of the preceding lemma holds. This set $C_Q$ is a covering of ${\mathbb R}^n$, and since ${\mathbb R}^n$ is locally compact, there is a locally finite subcover $C^1_Q$ of $C^0_Q$ which covers ${\mathbb R}^n$ as well. Now we may first construct densities via Trotter product formulas on finite intersections of sets 
\begin{equation}
\cap_{j\in {\mathbb N}_n\setminus \left\lbrace i_1,\cdots,i_k \right\rbrace, i=i(j) }B^{i_1,\cdots,i_k,j,i}_{\epsilon}(x).
\end{equation}
This can be done step by step splitting the operator. For each ball $B^{i_1,\cdots,i_k,j,i}_{\epsilon}(x)$ the reduced H\"{o}rmander condition holds on the subspace of ${\mathbb R}^n$
 built by the vectors $e_{i_m},~1\leq m\leq k$, where these are the standard basis vectors of the vector space ${\mathbb R}^n$ with the $i_m$th entry equal to $1$ and zero entries else.
  On this subsace we can use the result of the previous section and obtain a local construction of a density on $B^{i_1,\cdots,i_k,j,i}_{\epsilon}(x)\cap \mbox{span}\left\lbrace e_{i_1},\cdots ,e_{i_k}\right\rbrace$ which solves the equation
\begin{equation}
\frac{\partial p}{\partial t}-\sum_{j=1}^k\lambda_{i_j}\frac{\partial^2 p}{\partial x_{i_j}^2}=0
\end{equation}
on the closure of $B^{i_1,\cdots,i_k,j,i}_{\epsilon}(x)\cap \mbox{span}\left\lbrace e_{i_1},\cdots ,e_{i_k}\right\rbrace$. Then we may successively add drift terms as follows.If $k=n$ then the reduced H\"{o}H\"{o}rmander condition is satisfied and the existence of local densities with additional first order terms can be proved by  using the method of the previous section. If $k<n$, i.e., if the reduced H\"{o}rmander condition does not hold at $x$ and on $B^{i_1,\cdots,i_k,j,i}_{\epsilon}(x)$, then in a first step we consider $j\in {\mathbb N}_n\setminus \left\lbrace i_1,\cdots ,i_k\right\rbrace$, where for some $i\in \left\lbrace i_1,\cdots ,i_k\right\rbrace $ we have our witness of a rotational effect, i.e.,
\begin{equation}
b_j(y)\lambda_i(y)\neq 0
\end{equation}
If we look at the situation from a Trotter product perspective where we apply a first order vector field operator related to the operator term
\begin{equation}
b_j\frac{\partial }{\partial x_j}
\end{equation}
and a reduced H\"{o}rmander diffusion on $\mbox{span}\left\lbrace e_{i_1},\cdots ,e_{i_k}\right\rbrace$ related to the operator
\begin{equation}
\frac{\partial}{\partial t}-\sum_{j=1}^k\lambda_{i_j}\frac{\partial^2}{\partial x_{i_j}^2}
\end{equation}
in small time steps and take a Trotter product limit, then we observe the phenomenological essence of the H\"{o}rmander result. The first order terms cause small rotations such that the diffusion which is active on a $k$ dimensional subspace smoothes the value function in the direction $j\not\in \left\lbrace i_1,\cdots ,i_k\right\rbrace$. 
This means that in a small time step the vector field transports in some direction $e_j$ perpendicular to $\mbox{span}\left\lbrace e_{i_1},\cdots ,e_{i_k}\right\rbrace$, and the diffusion can act on the rotated space and has a smoothing effect which now includes the direction of $e_j$. This phenomenon is preserved in the Trotter product limit. 
This way a density for the equation
\begin{equation}
\frac{\partial p}{\partial t}-\sum_{j=1}^k\lambda_{i_j}\frac{\partial^2}{\partial x_{i_j}^2}+b_j\frac{\partial p}{\partial x_j}=0
\end{equation}
is obtained in a Trotter product limit. This procedure can be repeated in $n-k$ steps in the obvious way such that a density of the operator with the full drift term is obtained.
We may start this procedure with the operator where the reduced H\"{o}rmander condition is satisfied, o course, i.e., with the equation
\begin{equation}
\frac{\partial p}{\partial t}-\sum_{j=1}^k\lambda_{i_j}\frac{\partial^2}{\partial x_{i_j}^2}+\sum_{j=1}^kb_{i_j}\frac{\partial p}{\partial x_j}=0,
\end{equation}   
and apply the construction of the previous section.

Next we formalize these observations. First we have to construct the leading term based on the property (\ref{genhoerimpl}). We shall define it by a Trotter product formula defined via iteration of a pure diffusion and a first order vector field operator. Firs for first order differentail equations we observe 
\begin{prop}\label{prop}
 Assume that $\mu_i\in C^1_b\left([0,\infty)\times {\mathbb R}^n \right) $ and let $g\in C^1_b\left([0,\infty)\times {\mathbb R}^n \right) $. Then there exists a smooth global flow ${\cal F}^t$ generated by the vector field 
\begin{equation}\label{vec1*}
\sum_{i=1}^n\mu_i(x)\frac{\partial}{\partial x_i}
\end{equation}
 on $[0,\infty)\times {\mathbb R}^{n}$ such that the first order equation problem
\begin{equation}\label{vecf}
\begin{array}{ll}
\frac{\partial u}{\partial t}=\sum_{i=1}^n\mu_i(x^{n})\frac{\partial}{\partial x_i}u+g(x^{n}),\\
\\
~~u(0,x^{n})=f(x^{n}),
\end{array}
\end{equation}
has the solution
\begin{equation}\label{sol}
u(t,x^{n})=f\left({\cal F}^t x^{n}\right)+\int_0^tg({\cal F}^{t-s}x^{n})ds. 
\end{equation}

\end{prop}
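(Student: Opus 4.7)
The plan is to combine the classical Cauchy--Lipschitz theorem with the method of characteristics. First I would establish existence and smoothness of the flow ${\cal F}^t$, and then verify by direct differentiation that the proposed formula (\ref{sol}) solves (\ref{vecf}).

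\textbf{Step 1: Construction of the global flow.} Since each $\mu_i$ belongs to $C^1_b$, the mean value theorem applied to the spatial gradients shows that $x\mapsto (\mu_1(x),\ldots,\mu_n(x))$ is globally Lipschitz on $\mathbb{R}^n$, and bounded. The initial value problem
\begin{equation*}
\dot{\gamma}(t)=\mu(\gamma(t)),\qquad \gamma(0)=x\in\mathbb{R}^n,
\end{equation*}
therefore has a unique local solution by Picard--Lindel\"of, and boundedness of $\mu$ prevents blow-up, so the solution extends to all $t\in[0,\infty)$. Setting ${\cal F}^tx:=\gamma(t)$ defines the flow. The semigroup property ${\cal F}^{t+s}={\cal F}^t\circ{\cal F}^s$ is inherited from uniqueness of the ODE solutions.

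\textbf{Step 2: Regularity of the flow and the key identity.} By standard results on smooth dependence of ODE solutions on initial data, since $\mu\in C^1$ the map $x\mapsto {\cal F}^tx$ lies in $C^1(\mathbb{R}^n,\mathbb{R}^n)$, and the Jacobian $D_x{\cal F}^tx$ satisfies the variational equation. Jointly, $(t,x)\mapsto{\cal F}^tx$ is $C^1$. Differentiating the semigroup identity ${\cal F}^{t+h}x={\cal F}^t({\cal F}^hx)$ in $h$ at $h=0$ yields the identity I will need in Step 3:
\begin{equation*}
\frac{\partial}{\partial t}{\cal F}^tx=\mu({\cal F}^tx)=\sum_{i=1}^n\mu_i(x)\frac{\partial}{\partial x_i}{\cal F}^tx.
\end{equation*}

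\textbf{Step 3: Verification of the solution formula.} With $u(t,x)=f({\cal F}^tx)+\int_0^tg({\cal F}^{t-s}x)\,ds$, the $C^1$-regularity of $f,g,{\cal F}^\cdot$ together with the boundedness assumptions justify the chain rule and differentiation under the integral. A direct computation gives
\begin{equation*}
\frac{\partial u}{\partial t}=\nabla f({\cal F}^tx)\cdot\mu({\cal F}^tx)+g(x)+\int_0^t\nabla g({\cal F}^{t-s}x)\cdot\mu({\cal F}^{t-s}x)\,ds,
\end{equation*}
while, using the key identity from Step 2 on each term,
\begin{equation*}
\sum_{i=1}^n\mu_i(x)\frac{\partial u}{\partial x_i}=\nabla f({\cal F}^tx)\cdot\mu({\cal F}^tx)+\int_0^t\nabla g({\cal F}^{t-s}x)\cdot\mu({\cal F}^{t-s}x)\,ds.
\end{equation*}
Subtracting, everything except $g(x)$ cancels, confirming (\ref{vecf}), and $u(0,x)=f({\cal F}^0x)=f(x)$ gives the initial condition.

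\textbf{Main obstacle.} No step is genuinely deep; the only point that deserves care is the identity $D_x{\cal F}^tx\cdot\mu(x)=\mu({\cal F}^tx)$, which is precisely the infinitesimal form of the semigroup property and is the mechanism that makes the two terms in Step 3 cancel. Everything else is routine bookkeeping: Lipschitz continuity for existence, boundedness for no blow-up, $C^1$ for smooth dependence, and dominated convergence (using boundedness of $g$ and its derivatives) to commute the derivatives with the time integral.
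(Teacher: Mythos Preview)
Your argument is correct and complete. It differs from the paper's proof mainly in emphasis and in the tools invoked. The paper argues at the PDE level: it writes down the characteristic form of $L=\partial_t-\sum_i\mu_i\partial_{x_i}$, observes that the initial surface $\{t=0\}$ is non-characteristic, quotes standard local existence theory for first-order Cauchy problems, extends globally by a continuation argument ruling out finite-time blow-up via boundedness of the coefficients, and then appeals to Duhamel's principle for the inhomogeneous term. You instead work at the ODE level: Picard--Lindel\"of plus boundedness gives the global flow directly, smooth dependence on initial data gives $C^1$ regularity, and you verify the formula by hand using the semigroup identity $D_x{\cal F}^t x\cdot\mu(x)=\mu({\cal F}^t x)$. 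Your route is more elementary and self-contained, since it avoids quoting the non-characteristic Cauchy theory as a black box and makes explicit the one identity that does all the work; the paper's route situates the result within first-order PDE theory and handles the source term by a named principle rather than by explicit differentiation. Both are standard and both are valid.
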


\begin{proof}
Note that the variables $x^d$ are fixed and serve as parameters. Consider the characteristic form
\begin{equation}
\chi_L(z,\xi)=\xi_0-\sum_{i=1}^n\mu_i\xi_i
\end{equation}
of the operator $L\equiv\frac{\partial}{\partial t}-\sum_{i=1}^n\mu_i \frac{\partial}{\partial x_i}$, where $\xi = (\xi_{0},\xi_{1},\ldots,\xi_{n})$. The surface $S:=\left\lbrace t=0 \right\rbrace$ has a constant normal vector $(1,0,\cdots ,0)$, hence is non-characteristic for the surface $S$, i.e. at any point $z=(t,x)$ we have 
\begin{equation}
(1,0,\cdots ,0)\notin \mbox{char}_z(L):=\left\lbrace \xi\neq 0|\xi_0-\sum_{i=1}^n \mu_i\xi_i=0 \right\rbrace \text{.}
\end{equation}
Hence, basic PDE-theory tells us that the first order Cauchy problem has a unique local solution in a sufficiently small neighborhood of the surface $S$ and is given in the form of solutions of associated ODEs along its characteristic curves. This leads to a solution up to a time $T_1$. Next we may iterate the argument in time. Assume that this does not lead to a global solution but to a limit $T_{\infty}>0$. Then on the time horizon $\left[0,T_{\infty}\right]$ we have a classical solution. Moreover the solution has a representation on this horizon as a family of ODE-solutions along characteristic curves, and where the assumptions on the coefficients	  imply that this family of solutions is uniformly bounded up to time $T_{\infty}$. Hence we may apply the first order PDE argument above again for the Cauchy problem with initial data $S_{T_{\infty}}:=\left\lbrace t=T_{\infty} \right\rbrace$ and extend the solution beyond the horizon $\left[0,T_{\infty}\right]$.     Hence there is a unique global solution. 
For each $x_0^{n}\in {\mathbb R}^{n}$ the flow ${\cal F}_t$ of the vector field $\sum_i \mu_i\frac{\partial}{\partial x_i}$ defines a characteristic curve $x_0^{n}(t):={\cal F}_tx_0^{n}$. Note that
\begin{equation}
{\cal F}^t x^{n} 
\end{equation}
is a solution of the homogeneous Cauchy problem
\begin{equation}\label{vecfhom}
\begin{array}{ll}
\frac{\partial u}{\partial t}=\sum_{i=1}^n\mu_i(x)\frac{\partial}{\partial x_i}u,\\
\\
~~u(0,x^n)=f(x^{n}),
\end{array}
\end{equation} 
and then the form of the solution (\ref{sol}) of the inhomogeneous equation follows from Duhamel's principle.
\end{proof}
Next we have a local Trotter product result, i.e., we can prove a local form of the main part of theorem 1.3 for problems without first order terms where a reduced H\"{o}rmander condition holds, i.e., we prove
\begin{lem}
Consider a class of Cauchy problems 
\begin{equation}
	\label{hoer23}
	\left\lbrace \begin{array}{ll}
		\frac{\partial q}{\partial t}=\sum_{i=1}^mV_i^2q+V_0q\\
		\\
		q(0,x;y)=\delta_y(x),
	\end{array}\right.
\end{equation}
which is equivalent to the diffusion problem
\begin{equation}
\label{hoerdiff03}
	\left\lbrace \begin{array}{ll}
		0=\frac{\partial q}{\partial t}-\sum_{i,j=1}^na_{ij}\frac{\partial^2}{\partial x_i\partial x_j}q +\sum_{i=1}^nb_i\frac{\partial q}{\partial x_i}=Lq\\
		\\
		q(0,x;y)=\delta_y(x),
	\end{array}\right.
\end{equation}
and assume that the classical H\"{o}rmander condition is satisfied for all $y\in B_{\epsilon}(x)$, i.e.,
\begin{equation}
H_x\left[V_0,V_1,\cdots ,V_n \right] ={\mathbb R}^n \mbox{ for all }x\in B_{\epsilon}(x).
\end{equation}
Then there exists a local density $p$ which is a classical solution of 
(\ref{hoerdiff0}, i.e. for every $x\in {\mathbb R}^n$ there exists a ball $B_{\epsilon}(x)$ and a function
\begin{equation}
p:\left(0,\infty\right)\times B_{\epsilon}(x)\times B_{\epsilon}(x)\rightarrow {\mathbb R},
\end{equation}
which satisfies (\ref{hoerdiff03}), and has the distributional limit $\delta_y(x)$ as $t\downarrow 0$. 
  Furthermore, the Kusuoka Stroock estimates are satisfied locally.
\end{lem}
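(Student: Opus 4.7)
The plan is to reduce the case of the classical Hörmander condition to the reduced case already treated in the previous subsection, replacing the simple Gaussian leading term $N_0$ of (\ref{ngendef}) with a more involved leading term that is built by a Trotter product between (i) the reduced-Hörmander pure diffusion in the subspace $\mbox{span}\{e_{i_1},\dots,e_{i_k}\}$ and (ii) the flow of the drift vector field in the complementary directions. Once such a leading term is produced with suitable Gaussian upper bounds, the Levy-type expansion of the previous subsection carries over essentially unchanged, and the estimates on the iterated kernels are obtained by the same partial-integration and supremum arguments used there.

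Concretely, I would first invoke Lemma \ref{lem1} to cover $B_{\epsilon}(x)$ by the sets $B^{i_1,\dots,i_k,j,i}_{\epsilon}(x)$ on which $S_y=\{i_1,\dots,i_k\}$ and, for every $j\in {\mathbb N}_n\setminus S_y$, there exists $i\in S_y$ with $b_j(y)\lambda_i(y)\neq 0$, and moreover $b_j$ depends effectively on $x_i$ at $y$. On such a set, the previous subsection already provides a leading kernel $N^{\mbox{\tiny red}}_0(t,x;s,y)$ for the reduced diffusion
\begin{equation}
\frac{\partial u}{\partial t}-\sum_{j=1}^k\lambda_{i_j}(y)\frac{\partial^2 u}{\partial x_{i_j}^2}=0
\end{equation}
on $\mbox{span}\{e_{i_1},\dots,e_{i_k}\}$, with the Gaussian estimates (\ref{pxestN}). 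Using Proposition \ref{prop}, for each remaining drift coordinate $j\notin S_y$ I have a smooth global flow ${\cal F}^{t,j}$ generated by $b_j\partial/\partial x_j$. The leading term $\widetilde N$ is then constructed step by step, in the spirit of the section~2 discussion: alternate in small time steps $\tau=t/N$ the diffusion kernel $N^{\mbox{\tiny red}}_0$ on the subspace with $\widetilde N(t,x;s,y)\mapsto \widetilde N(t,x;s,{\cal F}^{\tau,j}y)$ for each $j\notin S_y$, then take the Trotter limit $N\to\infty$. The condition $b_j(y)\lambda_i(y)\neq 0$ together with the effective dependence of $b_j$ on $x_i$ causes the shift induced by ${\cal F}^{\tau,j}$ to become non-trivial after one diffusion step, so the composed kernel inherits Gaussian smoothing in the direction $e_j$ as well; iterating over $j\notin S_y$ yields a leading term $\widetilde N\in C^{1,2}$ with full Gaussian bounds of the form (\ref{pxestN}) but with a prefactor of polynomial growth in $x$, reflecting the non-boundedness of the flows.

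With $\widetilde N$ in place I would mimic the Levy expansion (\ref{pexp1})--(\ref{pexp3}), setting
\begin{equation}
p(t,x;s,y)=\widetilde N(t,x;s,y)+\int_s^t\int_{B_{\epsilon}(x)}\widetilde N(t,x;\sigma,\xi)\phi(\sigma,\xi;s,y)\,d\xi\,d\sigma,
\end{equation}
where $\phi=\sum_{m\geq 1}(L\widetilde N)_m$ is the usual recursion. Convergence of the series and the $C^{1,2}$-regularity of $p$ follow from exactly the weakly singular majorant bounds and partial-integration arguments of the previous theorem, now applied to $\widetilde N$ instead of $N_0$; the local Gaussian (Kusuoka--Stroock) estimates on $p$ follow from the bounds on $\widetilde N$ and the same splitting of the time integral over $[0,t/2]$ and $[t/2,t]$. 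The distributional initial condition $p(t,\cdot;s,y)\to\delta_y$ as $t\downarrow s$ is inherited from $\widetilde N$, since the vector field flows preserve mass and the diffusion kernel concentrates.

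The main obstacle I anticipate is a quantitative control of the Trotter limit, i.e.\ showing that the alternating composition converges to a smooth kernel with the asserted Gaussian bounds uniformly in the parameters, and that it depends smoothly enough on $(t,x,y)$ to be differentiated once in time and twice in space. The phenomenological picture is clear from the section~2 example, but making the rotational smoothing effect quantitative requires a change-of-coordinates argument at each Trotter step and a careful book-keeping of how the variance in the $e_j$-direction grows as a function of the diffusion variance in $e_i$-direction and the derivative $\partial b_j/\partial x_i$ at $y$. The global polynomial factor $(1+|x|)^{m_{j,\alpha,\beta}}$ in (\ref{pxest1}) arises precisely at this step, through the linear growth allowed for the vector field coefficients.
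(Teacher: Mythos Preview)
Your proposal is essentially the same approach as the paper's own proof: cover $B_{\epsilon}(x)$ via Lemma~\ref{lem1}, build the leading term on each piece as a Trotter product between the reduced diffusion on $\mbox{span}\{e_{i_1},\dots,e_{i_k}\}$ and the drift flow in the complementary directions (the paper writes this as $\lim_{m\uparrow\infty}\bigl(\exp(\tfrac{t}{m}L^q_0)\,{\cal F}^{{\mathbb N}^c_k}_{t/m}\bigr)^m$), then run the Levy expansion (\ref{pexp1})--(\ref{pexp3}) with this leading term in place of $N_0$ and read off the local Kusuoka--Stroock bounds, the polynomial prefactor arising exactly where you say it does. The only cosmetic differences are that the paper uses the single combined flow ${\cal F}^{{\mathbb N}^c_k}_t$ of $\sum_{j\notin S_y}b_j\partial_{x_j}$ rather than iterating over the individual $j$-flows, and that the paper appeals to ``a local contraction argument'' for the Trotter convergence where you (rightly) flag this as the main quantitative step; neither difference is substantive.
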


\begin{proof}
For each $x_0\in {\mathbb R}^n$ we consider some $y\in B_{\epsilon}(x_0)$ such that for some set $\left\lbrace i_1,\cdots,i_k\right\rbrace\subseteq {\mathbb N}_n$ for some $k\geq 1$ we have
\begin{equation}
\lambda_{i_j}(y)>0~\mbox{ for }~1\leq j\leq k,
\end{equation}
and where for all $j\in {\mathbb N}_n\setminus \left\lbrace i_1,\cdots,i_k\right\rbrace$ there is $i_j\in \left\lbrace i_1,\cdots,i_k\right\rbrace$ such that
\begin{equation}
b_j(y)\lambda_{i_j}(y)\neq 0.
\end{equation}
A leading term can now be constructed as a solution of the equation
\begin{equation}\label{initialdiff}
L^y\equiv\frac{\partial u}{\partial t}-\sum_{j=1}^k\lambda_{i_j}(y)\frac{\partial^2 u}{\partial x_{i_j}^2}+\sum_{j\in {\mathbb N}_n\setminus \{i_1,\cdots ,i_k\}}b_j(x)\frac{\partial u}{\partial x_j}=0.
\end{equation}
For each such $y$ in $B_{\epsilon}(x_0)$ we consider the closure of the open neighbourhood $U(y)$ where for all
$z\in U(y)$ we have
\begin{equation}
\left\lbrace i|\lambda_{i}(z)>0\right\rbrace \supseteq 
\left\lbrace i_1,\cdots,i_k\right\rbrace. 
\end{equation}
from the previous considerations we know that $B_{\epsilon}(x_0)$ is covered by finitely many of these type of closed neighborhoods $U(y^1),\cdots ,U(y^p)$. Note that the boundaries of these sets are critical sets of regular functions and therefore of Lebesgues measure zero.
For $1\leq q\leq p$ we solve equation (\ref{initialdiff}) with diffusion coefficients evaluated at $y=y^q$  by operator splitting and iterations of solutions of the pure diffusion equation
\begin{equation}\label{reddiff}
\frac{\partial u}{\partial t}-\sum_{j=1}^k\lambda_{i_j}(y^q)\frac{\partial^2 u}{\partial x_{i_j}^2}=0,
\end{equation}
with Dirac data, and solutions of  the vector field equation
\begin{equation}\label{flowi}
\frac{\partial u}{\partial t}+\sum_{j\in {\mathbb N}_n\setminus \{i_1,\cdots ,i_k\}}b_j(x)\frac{\partial u}{\partial x_j}=0,
\end{equation} 
which transport the data. Next, after possible relabeling of coordinates we may assume
\begin{equation}\label{conv}
\left\lbrace i_1,\cdots,i_k\right\rbrace=\left\lbrace 1,\cdots,k \right\rbrace 
\end{equation}
for convenience of notation in the following.
As we remarked we may assume that $y^q$ is outside some critical set such that for all $j\in {\mathbb N}_n\setminus \left\lbrace 1,\cdots,k  \right\rbrace$ we find an $i_j\in \left\lbrace 1,\cdots,k\right\rbrace$ such that $z_{i_j}\rightarrow b_j(z)$ is not critical at $z_{i_j}=y_{i_j}$. This means that the flow ${\cal F}^{{\mathbb N}^c_k}_t$ (the superscript ${\mathbb N}^c_k$ indicates that the flow transports the coordinates in ${\mathbb N}_n\setminus \left\lbrace 1,\cdots,k\right\rbrace$ associated to (\ref{flowi}) with (\ref{conv}) has the effect that for $p\in \left\lbrace k+1,\cdots,n\right\rbrace$  the $z_p$ of
\begin{equation}
z(t):={\cal F}^{{\mathbb N}^c_k}_t\left(y_1,\cdots y_k,y_{k+1},\cdots,y_n \right)^T 
\end{equation}
depends on some $x_i$ for $i\in \left\lbrace 1,\cdots,k \right\rbrace$ for some small $t>0$
such that we have a smoothing effect of the diffusion in all spatial directions in $U(y^q)$. 
Then a local contraction argument shows that for initial data $f$ and with the abbreviation
\begin{equation}
L^q_0\equiv\frac{\partial u}{\partial t}-\sum_{j=1}^k\lambda_{i_j}(y^q)\frac{\partial^2 u}{\partial x_{i_j}^2}
\end{equation}
the problem for the leading term in (\ref{initialdiff}) with  $y=y^q$ has a solution
\begin{equation}
u(t,x)=\lim_{m\uparrow \infty}
\left(\exp\left(\frac{t}{m}L^q_0\right) 
{\cal F}^{{\mathbb N}^c_k}_{\frac{t}{m}}\right) ^m  f(x)
\end{equation}
on $U(y^q)$. The associated density $p_{y,U(y^q)}$ is the leading term of the local expansion on $U(y^q)$ of the density $p_{U(y^q)}$ of the equation 
\begin{equation}\label{initialdiff}
\frac{\partial u}{\partial t}-\sum_{j=1}^k\lambda_{i_j}(y)\frac{\partial^2 u}{\partial x_{i_j}^2}+\sum_{j\in {\mathbb N}_n\setminus \{i_1,\cdots ,i_k\}}b_j(x)\frac{\partial u}{\partial x_j}=0.
\end{equation}
with variable second order coefficients. 
We then get the expansion
\begin{equation}\label{pexp1**}
\begin{array}{ll}
p_{U(y^q)}(t,x;s,y):=p_{y,U(y^q)}(t,x;s,y)\\
\\
+\int_s^{\tau}\int_{U^q}p_{y,U(y^q)}(t,x;\sigma,\xi)\phi(\sigma,\xi;s,y)d\sigma d\xi,
\end{array}
\end{equation}
where $\phi$ is a recursively defined function, i.e.,
\begin{equation}\label{pexp2**}
\phi(t,x;s,y)=\sum_{m=1}^{\infty}(L^yp_{y,U(y^q)})_m(t,x;s,y),
\end{equation}
along with the recursion
\begin{equation}\label{pexp3**}
\begin{array}{ll}
(L^yp_{y,U(y^q)})_1(t,x;s,y)=L^yp_{y,U(y^q)}(t,x;s,y)\\
\\
(L^yp_{y,U(y^q)})_{m+1}(t ,x):=\int_s^t\int_{\Omega}\left( L^yp_{y,U(y^q)}(t,x;\sigma,\xi)\right)_m \times\\
\\
\times L^y p_{y,U(y^q)}(\sigma,\xi;s,y)d\sigma d\xi.
\end{array}
\end{equation} 
Concerning the Kusuoka Stroock type estimates we consider the definition of the local density by
\begin{equation}
\lim_{m\uparrow \infty}
\left(\exp\left(\frac{t}{m}L^q_0\right) 
{\cal F}^{{\mathbb N}^c_k}_{\frac{t}{m}}\right) ^m
\end{equation}
and prove the estimate for
\begin{equation}
\left(\exp\left(\frac{t}{m}L^q_0\right) 
{\cal F}^{{\mathbb N}^c_k}_{\frac{t}{m}}\right)
\end{equation}
for small time $t/m$ first and the consider the limit $m\uparrow \infty$. The spatial derivatives associated with the flow lead to an additional polynomial factor on $x$ which cannot be absorbed by the diffusion since the diffusion may be effective only in certain directions. For the same reason the time dependence of the estimation constants cannot be absorbed by the diffusion,  and we find some constants $A^q_{j,\alpha,\beta}(t)$, $B^q_{j,\alpha,\beta}(t)$ and some integers $m^q_{j,\alpha,\beta}, n^q_{j,\alpha,\beta}$ such that
\begin{equation}\label{pxestN**}
\begin{array}{ll}
{\Bigg |}\frac{\partial^j}{\partial t^j} \frac{\partial^{|\alpha|}}{\partial x^{\alpha}} \frac{\partial^{|\beta|}}{\partial y^{\beta}}p_{y,U(y^q)}{\Bigg |}
\leq \frac{A^q_{j,\alpha,\beta}(t)(1+x)^{m^q_{j,\alpha,\beta}}}{t^{n^q_{j,\alpha,\beta}}}\exp\left(-B^q_{j,\alpha,\beta}(t)\frac{(x-y)^2}{t}\right).
\end{array}
\end{equation}
Since the estimates are local, we could absorb the term  $(1+x)^{m^q_{j,\alpha,\beta}}$ by a constant depending on the local domain $U(y^q)$. Similar for the time-dependence However, can now observe why these type of constants appear naturally in the global estimates. A similar estimate can then be obtained for the density $p_{U(y^q)}$ using a similar argument as in the case of the reduced H\"{o}rmander diffusion in the preceding section.
  \end{proof}

\subsection{Existence of global densities}
If the full H\"{o}rmander condition holds, then the set 
\begin{equation}
\begin{array}{ll}
R:={\Big \{}y\in {\mathbb R}^n|\forall p\in\left\lbrace 1,\cdots ,k \right\rbrace \lambda_{i_p}(y)>0~\&~\forall j\in {\mathbb N}_n\setminus \left\lbrace i_1,\cdots,i_k\right\rbrace \\
\\
\exists i\in \left\lbrace i_1,\cdots ,i_k\right\rbrace:~y\not\in C(i,b_j) {\Big \}},
\end{array}
\end{equation}
where
\begin{equation}
C(i,b_j):=\left\lbrace y|~y~\mbox{is critical point of }z_i\rightarrow b_j(z)\right\rbrace 
\end{equation}
is dense in ${\mathbb R}^n$. This means that there is an open cover of balls $B_{\epsilon}(y^j)$ where in each open ball we have an expansion of the density on the domain $(0,T)\times B_{\epsilon}(y^j)\times (0,T)\times B_{\epsilon}(y_j)$. 
 In order to globalize the results spatially we observe that we have a locally finite cover 
\begin{equation}\label{cover}
\cup_{i\in {\mathbb N}}B_{\epsilon}(z_i)={\mathbb R}^n
\end{equation}
for some $z_i\in {\mathbb R}^n$, where on each domain $(0,T)\times B_{\epsilon_i}(z_i)\times (0,T)\times B_{\epsilon_i}(z_i)$ a local density $p_{B_{\epsilon_i}(z_i)}$ exists.
\begin{rem}
In order to do so we may first consider the countable set of all center points $z_i\in Q^n$, where $Q$ denotes the set of rational numbers. Around each $z_i$ we find a ball $B_{\epsilon_i}(z_i)$ where we have a classical density on the domain $(0,T)\times B_{\epsilon_i}(z_i)\times (0,T)\times B_{\epsilon_i}(z_i)$. As the space ${\mathbb R}^n$ is locally compact, we finite a locally finite subcover of ${\mathbb R}^n$
\end{rem} 
For some arbitrary $x\in {\mathbb R}^n$ we may assume (upon renumeration if necessary) that
\begin{equation}
x\in B_{\epsilon}(z_0).
\end{equation}
According to the previous section the density $p_{B_{\epsilon}(z_0}$ on $\left(0,T\right)\times B_{\epsilon}(z_0)\times (0,T)\times B_{\epsilon}(z_0)$ exists. 
Furthermore, we may assume (again upon renumeration if necessary) that for all $i\in {\mathbb N}$
\begin{equation}
B_{\epsilon}(z_i)\cap \left( \cup_{j\in \left\lbrace 1,\cdots ,i-1\right\rbrace  }B_{\epsilon}(z_j)\right)\neq \oslash.
\end{equation}
We set up a scheme which produces global densities $p$ on $\left(0,\infty\right)\times {\mathbb R}^n \times \left(0,\infty \right)\times {\mathbb R}^n$ starting from $p_{B_{\epsilon}(z_0)}$. We do this inductively where we use a density property.
For $i\geq 2$ we denote
\begin{equation}
U_{i-1}:=\cup_{j\in \left\lbrace 1,\cdots ,i-1\right\rbrace  }B_{\epsilon}(z_j)
\end{equation}
and assume that a density $p_{U_{i-1}}$ on $\left(0,\infty\right)\times U_{i-1} \times \left(0,\infty \right)\times U_{i-1}$ has been constructed. 
For $U_i=U_{i-1}\cup B_{\epsilon}(z_i)$ we consider a partion of unity $\phi_1,\phi_2$ subordinate to $U_{i-1},B_{\epsilon}(z_i)$ (the existence follows from standard results such  as Proposition 1.2 in \cite{KHyp}) and define a densities 
$p_{U^k_i},~k\geq 0$ on $\left(0,\infty\right)\times U_i \times \left(0,\infty \right)\times U_i$. We start with
\begin{equation}
p_{U_i}(t,x;s,y)=\int_{U_i}\left( \phi_1(z)p_{U_{i-1}}(t,x;\sigma,z)\right) \left( \phi_2(z)p_{B_{\epsilon}(z_i)}(\sigma,z;s,y)\right)c_{U_i}dsdy,
\end{equation}
where $c_{U_i}>0$ is a normalisation constant.
As we have a locally finite cover of ${\mathbb R}^n$ the limit $i\uparrow \infty$ leads to a global density $p$, where we may define
\begin{equation}
p(t,x;s,y)=\lim_{i\uparrow \infty}p_{U_i}(t,x;s,y)
\end{equation}
on $\left(0,\infty\right)\times {\mathbb R}^n \times \left(0,\infty \right)\times {\mathbb R}^n$.

\section{Extension of the proof to the case of the weak H\"{o}rmander condition}

Assume that the bounded coefficients  $a_{ij},b_k,~1\leq i,j,k\leq n$ satisfy a weak H\"{o}rmander condition with the dense set $D\subset {\mathbb R}^n$. These coefficients can be approximated by a sequence of smooth coefficient functions $a^m_{ij},b^m_k,~1\leq i,j,k\leq n,~m\in {\mathbb N}$ which satisfy the classical H\"{o}rmander condition and converge pointwise to the coefficients $a_{ij},b_k,~1\leq i,j,k\leq n$. The simple idea to prove this is as follows (cf. also \cite{FK}). We do not need to repeat this here, but let is consider the main idea.
Start with a family of smooth bounded coefficients $a^0_{ij},b^0_k,~1\leq i,j,k\leq n$ which satisfy the classical H\"{o}rmander condition and consider a countable dense subset $D_0\subset D$ with $\overline{D_0}={\mathbb R}^n$, where $D_0=\left\lbrace z_{p/q}|q\in Q \right\rbrace$, and where $Q$ denotes the set of rational numbers.  For fixed $q>1$ consider the countable subset $D^q_0:=\left\lbrace z_{p/q}\in D_0|p\in {\mathbb N}~\&~q~\mbox{fixed}\right\rbrace$. Then for this set using an appropriate partion of unity we easily construct smooth coefficients $a^q_{ij},b^q_k,~1\leq i,j,k\leq n$ which satisfy the H\"{o}rmander condition on $D^q_0$ and equal $a_{ij},b_k,~1\leq i,j,k\leq n$ on $D^q_0$. this can be done inductively where in the limit we have coefficients which satisfy the classical H\"{o}rmander condition on $D_0=\cup_{q\geq 1}D^q_0$ and are globally Lipschitz. We note 

\begin{lem}
Given second order coefficients  $a_{ij},~1\leq i,j\leq n$ and $b_k,~1\leq k\leq n$ which satisfy the H\"{o}rmander condition in the weak sense, i.e., in the sense that there exists a dense set $D$ of ${\mathbb R}^n$ such that the classical H\"{o}rmander condition is satisfied on $D$, and the coefficients are globally Lipschitz on the complementary set, then there exists a sequence of functions $a_{ij}^m\in C^{\infty}$ and $b^m_i\in C^{\infty}$ with
\begin{equation}
\lim_{m\uparrow \infty}a_{ij}^m(x)=a_{ij}(x),
\end{equation}
for all $1\leq i,j\leq n$, and
\begin{equation}
\lim_{m\uparrow \infty}b_k^m(x)=b_k(x)
\end{equation}
for all $1\leq k\leq n$, and such that the classical H\"{o}rmander condition is satisfied for all coefficient data $a_{ij}^m(x), b_k^m(x)$
for all $1\leq i,j,k\leq n$ and $m\in {\mathbb N}$, i.e.,
\begin{equation}
H^x\left[a_{ij}^m(x), b_k^m(x),1\leq i,j,k\leq n \right]={\mathbb R}^n 
\end{equation}for all $x\in {\mathbb R}^n$

\end{lem}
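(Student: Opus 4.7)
The plan is to construct the approximating sequence $(a_{ij}^m, b_k^m)$ by an inductive partition-of-unity patching scheme. Two structural facts drive the construction: the classical H\"{o}rmander condition at a single point is an \emph{open} condition on the finite-order jet of smooth coefficients (once one fixes the bracket depth witnessing the condition at that point); and the original coefficients are globally Lipschitz, hence determined, up to arbitrarily small error, by their values on any dense set. First I would fix an enumeration $D_0 = \{x_1,x_2,\dots\}$ of a countable dense subset of $D$, which is possible since $D$ is dense in $\mathbb{R}^n$ and $\mathbb{R}^n$ is separable. I would also fix a smooth seed $(a_{ij}^0, b_k^0)$ satisfying the classical H\"{o}rmander condition everywhere; the trivial choice $a_{ij}^0 = \delta_{ij}$, $b_k^0 \equiv 0$ is already elliptic and hence satisfies the condition at every point. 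By standard Whitney extension applied to the globally Lipschitz functions $a_{ij}, b_k$, one obtains for each $m$ smooth extensions $\tilde a_{ij,m},\tilde b_{k,m} \in C^\infty(\mathbb{R}^n)$ agreeing with the target values at $x_m$, that is, $\tilde a_{ij,m}(x_m)=a_{ij}(x_m)$ and similarly for $\tilde b_{k,m}$.

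Inductively, given smooth $(a_{ij}^{m-1},b_k^{m-1})$ satisfying the classical H\"{o}rmander condition everywhere, I would insert a bump $\chi_m \in C_c^\infty(B_{r_m}(x_m))$ with $\chi_m(x_m)=1$ and set
\begin{equation*}
a_{ij}^m = (1-\chi_m)\,a_{ij}^{m-1} + \chi_m\,\tilde a_{ij,m}, \qquad b_k^m = (1-\chi_m)\,b_k^{m-1} + \chi_m\,\tilde b_{k,m}.
\end{equation*}
The radius $r_m > 0$ is chosen small enough that three things hold simultaneously: (i) $\overline{B_{r_m}(x_m)}$ is disjoint from all earlier patch points $x_1,\dots,x_{m-1}$, so $a_{ij}^m(x_\ell) = a_{ij}(x_\ell)$ is preserved for every $\ell < m$; (ii) the classical H\"{o}rmander condition holds at every point of $\overline{B_{r_m}(x_m)}$ for the new pair; and (iii) the patch perturbation obeys $\sup|a_{ij}^m - a_{ij}^{m-1}| \le 2^{-m}$ and likewise for $b_k$. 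Property (ii) is available because at $x_m$ the H\"{o}rmander condition holds both for the inductive predecessor (by induction hypothesis) and for the local smooth extension (since $x_m\in D$), hence for any smooth convex combination at $x_m$; the condition at the relevant finite bracket depth depends continuously on finitely many derivatives, so it persists in a sufficiently small neighborhood.

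Pointwise convergence $a_{ij}^m(x) \to a_{ij}(x)$ at an arbitrary $x\in\mathbb{R}^n$ would then follow by selecting $x_m\in D_0$ arbitrarily close to $x$, using the identity $a_{ij}^M(x_m)=a_{ij}(x_m)$ for all $M \ge m$ (a consequence of the disjointness requirement (i) applied to all later balls), together with the common Lipschitz control of the $a_{ij}^M$'s in a fixed neighborhood of $x$. The main obstacle is the preservation of the classical H\"{o}rmander condition globally through the entire patching: although at each individual point the condition is open in a finite-order jet topology, the bracket depth witnessing it may vary from point to point, so the control available at $x_m$ does not automatically persist under later distant modifications. This is handled by organising the induction with a locally finite cover of $\mathbb{R}^n$ and matching the support radii $r_m$ to a uniform bracket depth attainable on the compact patch in which the $m$th modification is made; such a uniform depth exists by continuity of the finitely many derivatives of the (already smooth) $a_{ij}^{m-1},b_k^{m-1}$ on the closure of each ball, and by openness of the resulting rank condition.
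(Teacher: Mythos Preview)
Your construction follows the same outline as the paper's (which in fact only sketches the idea in the paragraph preceding the lemma and defers to \cite{FK}): fix a countable dense subset of $D$, start from a smooth elliptic seed, and patch inductively via partitions of unity so that the approximants match the target on more and more points of $D_0$.

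There is, however, a genuine gap in your justification of property~(ii). You write that the patched pair satisfies the classical H\"ormander condition on $\overline{B_{r_m}(x_m)}$ because ``at $x_m$ the H\"ormander condition holds both for the inductive predecessor \dots\ and for the local smooth extension (since $x_m\in D$), hence for any smooth convex combination at $x_m$.'' Two things fail here. First, the H\"ormander condition is \emph{not} stable under convex combination: if two smooth coefficient systems each satisfy it at a point, a nontrivial convex combination need not, and in the transition annulus where $0<\chi_m<1$ your patched system is exactly such a combination. Openness alone does not rescue this, because the passage from $a_{ij}^{m-1}$ to $a_{ij}^{m}$ is not a small $C^N$-perturbation unless you separately force $\tilde a_{ij,m}$ to be $C^N$-close to $a_{ij}^{m-1}$ on the ball. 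Second, your Whitney extension is specified to agree with $a_{ij}$ only in \emph{value} at $x_m$; but the H\"ormander condition at $x_m$ for the original coefficients is a condition on their finite-order \emph{jet}, so $\tilde a_{ij,m}$ does not inherit it merely from $x_m\in D$. A repair requires (a) taking $\chi_m\equiv 1$ on a smaller neighbourhood of $x_m$, (b) building $\tilde a_{ij,m}$ to match the jet of $a_{ij},b_k$ at $x_m$ to the needed order, and (c) arranging that on the transition annulus the patched coefficients are a $C^N$-small perturbation of the predecessor for the uniform bracket depth $N$ valid on that compact ball---which in turn forces the extension and the predecessor to already be $C^N$-close there. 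None of this is automatic from the construction as written.

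A smaller point: your pointwise-convergence argument invokes ``common Lipschitz control of the $a_{ij}^M$'s in a fixed neighborhood of $x$.'' This is not immediate, since $\nabla\chi_m$ has size of order $1/r_m$ and the modification $\chi_m(\tilde a_{ij,m}-a_{ij}^{m-1})$ can therefore have arbitrarily large Lipschitz constant unless the factor $\tilde a_{ij,m}-a_{ij}^{m-1}$ is itself small on the support; requirement~(iii) bounds only the product.
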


Using this lemma we first recall that a unction $p:(0,T)\times \Omega\times \Omega\rightarrow {\mathbb R}$ for some domain $\Omega\subset {\mathbb R}^n$ is a density or a fundamental solution of the second order equation
\begin{equation}
Lu\equiv \frac{\partial u}{\partial t}
-\sum_{i,j=1}^na_{ij}\frac{\partial^2 u}{\partial x_i\partial x_j}+\sum_{j=1}^nb_j\frac{\partial p}{\partial x_j}=0,
\end{equation}
if this function $(t-s,x;y)\rightarrow p(t-s,x;y)$ satisfies $Lu=0$ for fixed $s,y$ and $t>s$, and of for all bounded data $f\in C^{\infty}\left(\overline{\Omega}\right) $ we have
\begin{equation}
\lim_{t\downarrow s}\int_{\Omega}f(y)p(t-s,x,y)dy=f(x).
\end{equation}
In addition we may require that the spatial integral over $\Omega$ is normed by $1$. 
 We remark that we this definition is in the time-homogeneous form, and that it is the classical definition which avoids distributions, and which is obviously equivalent to the distributional formaltion which requires that the initial data are given by the Dirac delta distribution $\delta(x-y)$ for all $y\in {\mathbb R}^n$. Furthermore, this classical definition is sometimes given such that the all quantifier ranges over all bounded continuous functions which is also equivalent. 
Next we consider a sequence 
$a_{ij}^m\in C^{\infty}$ and $b^m_i\in C^{\infty}$ which satisfies the classical 
\begin{equation}
\lim_{m\uparrow \infty}a_{ij}^m(x)=a_{ij}(x),~\lim_{m\uparrow \infty}b_{i}^m(x)=b_i(x)
\end{equation}
or all $1\leq i,j,k\leq n$ where the coefficients $a_{ij},b_i,1\leq i,j,k\leq n$ satisfy the the weak H\"{o}rmander condition.
Next using the observation of the preceding section it is sufficient to show the existence of a classical limit $p=\lim_{m\uparrow }p^m$ locally, i.e., that for each $x\in {\mathbb R}^n$ there is $B_{\epsilon}(x)$ such that the density $p_m$ which solves
\begin{equation}\label{equationm}
L^mu:=\equiv \frac{\partial u^m}{\partial t}
-\sum_{i,j=1}^na^m_{ij}\frac{\partial^2 u^m}{\partial x_i\partial x_j}+\sum_{j=1}^nb^m_j\frac{\partial u^m}{\partial x_j}=0
\end{equation}
on $(0,T)\times B_{\epsilon}(x)$ for some $T>0$, and along with $u^m(0,x)=f(x)$ for given $f\in C^{\infty}\left( B_{\epsilon}(x)\right)$. Now for a solution $u^m$ of (\ref{equationm}) we have
\begin{equation}
Lu^m=\sum_{i,j=1}^n\left(a_{ij}-a^m_{ij} \right)\frac{\partial^2 u^m}{\partial x_i\partial x_j}+ \sum_{k=1}^n\left(b^m_{k}-b_{k} \right)\frac{\partial u^m}{\partial x_k}
\end{equation}
Locally, we the Kusuoka-Stroock type estimate have a Gaussian type upper bound and the Lipschitz continuity of the coefficients $a_{i,j},a^m_{ij}$ and $b_k,b^mk$ implies that the right side is uniformly bounded and goes to zero as $m\uparrow \infty$.  
Hence,
\begin{equation}
\lim_{m\uparrow \infty}\sum_{i,j=1}^n\left(a_{ij}-a^m_{ij} \right)\frac{\partial^2 u^m}{\partial x_i\partial x_j}+ \lim_{m\uparrow \infty}\sum_{k=1}^n\left(b^m_{k}-b_{k} \right)\frac{\partial u^m}{\partial x_k}=0
\end{equation}
for all $f$ such that $p=\lim_{m\uparrow \infty}p^m$ exists in a classical sense. We may then use the argument of the preceding section in order to prove this classical limit globally in space and time.

Finally we note that our construction leads straightforwardly to local adjoints mentioned in \cite{KHyp}, where we have used the local adjoint in order to generalise a global scheme for a basic equation of fluid mechanics.

\end{document}